\documentclass[12pt,twoside,reqno]{amsart}
\linespread{1.05}
\usepackage[colorlinks=false,citecolor=blue]{hyperref}
\usepackage{mathptmx, amsmath, amssymb, amsfonts, amsthm, mathptmx, enumerate, color,mathrsfs}
\setlength{\textheight}{23cm}
\setlength{\textwidth}{16cm}
\setlength{\oddsidemargin}{0cm}
\setlength{\evensidemargin}{0cm}
\setlength{\topmargin}{0cm}

\usepackage{graphicx}
\usepackage{lipsum}

\newcommand\blfootnote[1]{%
  \begingroup
  \renewcommand\thefootnote{}\footnote{#1}%
  \addtocounter{footnote}{-1}%
  \endgroup
}

\usepackage{epstopdf}
\newtheorem{theorem}{Theorem}[section]
\newtheorem{lemma}{Lemma}[section]
\newtheorem{remark}{Remark}[section]
\newtheorem{definition}{Definition}[section]
\newtheorem{corollary}{Corollary}[section]

\numberwithin{equation}{section}

\begin{document}
	
\title{On the Euclidean operator radius and norm}
\author{Mohammad Sababheh*, Hamid Reza Moradi and Mohammad Alomari}
\subjclass[2020]{Primary 47A12, 47A30; Secondary 47A63, 47B15.}
\keywords{Euclidean operator radius, numerical radius, norm inequality, operator matrix, off-diagonal part, operator convex function}

\begin{abstract}
In this paper, we show several bounds for the numerical radius of a Hilbert space operator in terms of the Euclidean operator norm. The obtained forms will enable us to find interesting refinements of celebrated results in the literature.

Then, the $f$-operator radius, recently defined as a generalization of the Euclidean operator radius, will be studied. Many upper bounds will be found and matched with existing results that treat the numerical radius. Special cases of this discussion will lead to some refinements and generalizations of some well-established results in the field.

Further, numerical examples are given to support our findings, and a simple optimization application will be presented.
\end{abstract}

\maketitle
\blfootnote{*Corresponding author: Mohammad Sababheh}
\pagestyle{myheadings}
\markboth{\centerline {}}
{\centerline {}}
\bigskip
\bigskip

\section{Introduction}
In the sequel, let $\mathbb{B}(\mathcal{H})$ be the $C^*$-algebra of all bounded linear operators on a Hilbert space $\mathcal{H}.$ The operator norm and the numerical radius of an operator $T\in\mathbb{B}(\mathcal{H})$ are defined, respectively, as
\[\|T\|=\sup_{\|x\|=1}\|Tx\|\;{\text{and}}\;\omega(T)=\sup_{\|x\|=1}|\left<Tx,x\right>|.\]

The investigation of possible relations between these quantities has attracted considerable attention in the literature, as one can see \cite{HSM, KMS, MS, SSM}.

The relation
\begin{equation}\label{eq_equiv_intro}
\frac{1}{2}\|T\|\leq\omega(T)\leq \|T\|
\end{equation}
represents one of the most basic relations, as found in \cite[Theorem 1.3-1]{Gustafson_Book_1997}. 

Sharpening \eqref{eq_equiv_intro} has attracted numerous researchers, where better lower and upper bounds for $\omega(T)$ in terms of $\|T\|$ is a target. The inequalities, which can be found in \cite{Ki, Ki1},
\begin{equation}\label{eq_kitt_intro1}
\omega(T)\leq\frac{1}{2}\|\;|T|+|T^*|\;\|,
\end{equation}
and
\begin{equation}\label{eq_kitt_intro2}
\frac{1}{4}\left\| |T|^2+|T^*|^2\right\|\leq \omega^2(T)\leq \frac{1}{2}\left\| |T|^2+|T^*|^2\right\|
\end{equation}
have been among the sharpest and simplest relations.

The notion of the numerical radius was extended in \cite{2} to the so-called Euclidean operator radius, which is defined for $\left( {{T}_{1}},\ldots ,{{T}_{n}} \right)\in \mathbb{B}\left( \mathscr{H} \right)\times \cdots \times \mathbb{B}\left( \mathscr{H} \right)$ as follows 
\[{{\omega }_{e}}\left( {{T}_{1}},\ldots ,{{T}_{n}} \right)=\underset{\left\| x \right\|=1}{\mathop{\sup }}\,{{\left( \sum\limits_{i=1}^{n}{{{\left| \left\langle {{T}_{i}}x,x \right\rangle  \right|}^{2}}} \right)}^{\frac{1}{2}}}.\]

The Euclidean operator radius itself was generalized in \cite{3} to the form
\[{{\omega }_{q}}\left( {{T}_{1}},\ldots ,{{T}_{n}} \right)=\underset{\left\| x \right\|=1}{\mathop{\sup }}\,{{\left( \sum\limits_{i=1}^{n}{{{\left| \left\langle {{T}_{i}}x,x \right\rangle  \right|}^{q}}} \right)}^{\frac{1}{q}}};\text{ }q\ge 1,\]
which gives $\omega_e$ upon letting $q=2.$

The definitions of $\omega_e$ and $\omega_q$ triggered the following definition, as in \cite{a1}.
\begin{definition}\cite{a1}
Let $\left( {{T}_{1}},\ldots ,{{T}_{n}} \right)\in \mathbb{B}\left( \mathscr{H} \right)\times \cdots \times \mathbb{B}\left( \mathscr{H} \right)$ and let $f:J\to \mathbb{R}$ be a strictly increasing convex  function such that $f\left( 0 \right)=0$.  
We define the $f$-operator radius of the operators $T_1,\ldots,T_n$ by
	\[{{\omega }_{f}}\left( {{T}_{1}},\ldots ,{{T}_{n}} \right)=\underset{\left\| x \right\|=1}{\mathop{\sup }}\,{{f}^{-1}}\left( \sum\limits_{i=1}^{n}{f\left( \left| \left\langle {{T}_{i}}x,x \right\rangle  \right| \right)} \right).\]
\end{definition}
We refer the reader to \cite{a1} for a detailed discussion with informative properties of $\omega_f$. It is clear that $\omega_e$ and $\omega_q$ are special cases of $\omega_f$.

Indeed, the extension of the operator norm was also discussed in the literature. For example, Popescu \cite{2} defined
\[{{\left\| \left( {{T}_{1}},\ldots ,{{T}_{n}} \right) \right\|}_{e}}:=\underset{\left( {{\lambda }_{1}},\ldots {{\lambda }_{n}} \right)\in {{\mathcal B}_{n}}}{\mathop{\sup }}\,\left\| {{\lambda }_{1}}{{T}_{1}}+\ldots +{{\lambda }_{2}}{{T}_{n}} \right\|,\]
where ${\mathcal B}_{n}$ is the set of all complex $n$-tuples $(\lambda_1,\ldots,\lambda_n)$ such that $\sum_{i=1}^{n}|\lambda_i|^2=1.$

Dragomir \cite[Theorem 6.1]{Dragomir_JIPAM_2007} showed that
\begin{equation}\label{06}
{{\left\| \left( {{T}_{1}},\ldots ,{{T}_{n}} \right) \right\|}_{e}}=\underset{\left\| x \right\|=\left\| y \right\|=1}{\mathop{\sup }}\,{{\left( \sum\limits_{i=1}^{n}{{{\left| \left\langle {{T}_{i}}x,y \right\rangle  \right|}^{2}}} \right)}^{\frac{1}{2}}}.
\end{equation}
We refer the reader to \cite{Dragomir2006, MSS} for possible readings on $\omega_e, \omega_q, \|\cdot\|_e$ and $\omega_f$.

This paper will discuss new relations between the numerical radius $\omega$ and the Euclidean operator norm $\|\cdot\|_e.$ More precisely, we will be able to present some bounds and refinements of celebrated results using $\|\cdot\|_e.$ For example, we will show that 
\[{{\omega }^{2}}\left( T \right)\le \frac{1}{2}\left\| \left( T,{{T}^{*}} \right) \right\|_{e}^{2}\le \frac{1}{2}\left\| T{{T}^{*}}+{{T}^{*}}T \right\|,\]
as such refinement of the second inequality \eqref{eq_kitt_intro2}. Many other similar results will be presented, and bounds for the numerical radius of operator matrices in terms of $\|\cdot\|_e$ will be shown. This will be the first part of our main results.

The second part of the main results will focus on further discussion of $\omega_f$, where numerous new forms that extend those relations for $\omega$ will be discussed.

The significance of this work can be summarized in the way the notions of $\omega$ and $\|\cdot\|_e$ are connected in forms that refine the existing literature about $\omega.$

The following lemmas will be needed in the subsequent sections.

\begin{lemma}\label{lem_hirz}\cite[(4.6)]{1}
Let $A,B\in\mathbb{B}(\mathcal{H})$. Then
\[\frac{1}{2}\;\underset{\theta \in \mathbb{R}}{\mathop{\sup }}\,\left\| A+{{e}^{\rm i\theta }}{{B}^{*}} \right\|=\omega \left( \left[ \begin{matrix}
   O &A  \\
   B & O  \\
\end{matrix} \right] \right),\]
where $O$ is the zero operator in $\mathbb{B}(\mathcal{H}).$
\end{lemma}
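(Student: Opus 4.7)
The plan is to invoke the classical Toeplitz-type identity $\omega(T)=\sup_{\theta\in\mathbb{R}}\|\mathrm{Re}(e^{\mathrm{i}\theta}T)\|=\tfrac{1}{2}\sup_{\theta\in\mathbb{R}}\|e^{\mathrm{i}\theta}T+e^{-\mathrm{i}\theta}T^{*}\|$ and specialize it to the off-diagonal block operator $T=\left[\begin{smallmatrix}O&A\\B&O\end{smallmatrix}\right]$. This identity itself is standard: for any $z\in\mathbb{C}$ one has $|z|=\sup_{\theta}\mathrm{Re}(e^{\mathrm{i}\theta}z)$, so exchanging the suprema in $\omega(T)=\sup_{\|x\|=1}|\langle Tx,x\rangle|$ and using that the numerical radius of the self-adjoint operator $\mathrm{Re}(e^{\mathrm{i}\theta}T)$ equals its norm produces the claim.

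First, I would compute the adjoint $T^{*}=\left[\begin{smallmatrix}O&B^{*}\\A^{*}&O\end{smallmatrix}\right]$ and form
\[
e^{\mathrm{i}\theta}T+e^{-\mathrm{i}\theta}T^{*}=\begin{bmatrix} O & e^{\mathrm{i}\theta}A+e^{-\mathrm{i}\theta}B^{*}\\ e^{\mathrm{i}\theta}B+e^{-\mathrm{i}\theta}A^{*} & O\end{bmatrix}.
\]
Setting $X_\theta:=e^{\mathrm{i}\theta}A+e^{-\mathrm{i}\theta}B^{*}$ and $Y_\theta:=e^{\mathrm{i}\theta}B+e^{-\mathrm{i}\theta}A^{*}$, a direct check gives $Y_\theta^{*}=X_\theta$, hence $\|X_\theta\|=\|Y_\theta\|$. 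Since the norm of an off-diagonal $2\times 2$ operator matrix equals the maximum of the norms of its two nonzero entries, one obtains $\|e^{\mathrm{i}\theta}T+e^{-\mathrm{i}\theta}T^{*}\|=\|X_\theta\|$.

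Next, factoring out the unimodular scalar, $\|X_\theta\|=\|A+e^{-2\mathrm{i}\theta}B^{*}\|$, and performing the change of variable $\varphi=-2\theta$, which preserves the supremum over $\mathbb{R}$, I arrive at
\[
\omega(T)=\frac{1}{2}\sup_{\theta\in\mathbb{R}}\|e^{\mathrm{i}\theta}T+e^{-\mathrm{i}\theta}T^{*}\|=\frac{1}{2}\sup_{\varphi\in\mathbb{R}}\|A+e^{\mathrm{i}\varphi}B^{*}\|,
\]
which is the stated identity.

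There is no genuine obstacle here; the only delicate points are (i) justifying the exchange of suprema that turns $\omega(T)$ into a supremum of norms of real parts (which rests on $|z|=\sup_\theta\mathrm{Re}(e^{\mathrm{i}\theta}z)$ and on $\omega(S)=\|S\|$ for self-adjoint $S$), and (ii) noticing the symmetry $Y_\theta^{*}=X_\theta$ that collapses the block-matrix norm to the norm of a single entry. Everything else is a bookkeeping calculation with block matrices and a trivial reparametrization of $\theta$.
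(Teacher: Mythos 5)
Your proof is correct: the reduction of $\omega$ to $\sup_{\theta}\|\Re(e^{\mathrm{i}\theta}T)\|$, the computation of the off-diagonal block $e^{\mathrm{i}\theta}T+e^{-\mathrm{i}\theta}T^{*}$, the observation $Y_\theta^{*}=X_\theta$, and the reparametrization all check out. The paper itself gives no proof of Lemma \ref{lem_hirz} (it is quoted from \cite[(4.6)]{1}), and your argument is essentially the standard derivation used in that reference, so there is nothing further to reconcile.
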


\begin{lemma}\cite[Theorem 2.3]{1}\label{lem_prod}
 Let $A,B\in\mathbb{B}(\mathcal{H})$. Then
\begin{equation*}
\max \left\{ \omega \left( {{A}^{*}}B \right),\omega \left( B{{A}^{*}} \right) \right\}
\le {{\omega }^{2}}\left( \left[ \begin{matrix}
   O & {{A}^{*}}  \\
   B & O  \\
\end{matrix} \right] \right).
\end{equation*}
\end{lemma}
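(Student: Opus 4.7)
The plan is to reduce the inequality to Berger's classical power inequality $\omega(S^2)\le\omega(S)^2$ applied to the off-diagonal block operator $M=\begin{bmatrix} O & A^* \\ B & O \end{bmatrix}$. The key algebraic observation is that squaring $M$ eliminates the off-diagonal structure and turns it into a block-diagonal operator whose diagonal entries are precisely $A^*B$ and $BA^*$.

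First, I would carry out the block multiplication
\[
M^2=\begin{bmatrix} O & A^* \\ B & O \end{bmatrix}\begin{bmatrix} O & A^* \\ B & O \end{bmatrix}=\begin{bmatrix} A^*B & O \\ O & BA^* \end{bmatrix}.
\]
Next, I would establish (or invoke) the standard fact that the numerical radius of a block-diagonal operator equals the maximum of the numerical radii of its diagonal blocks: for any unit vector $x\oplus y\in\mathcal{H}\oplus\mathcal{H}$,
\[
\bigl|\langle A^*Bx,x\rangle+\langle BA^*y,y\rangle\bigr|\le \omega(A^*B)\|x\|^2+\omega(BA^*)\|y\|^2\le\max\{\omega(A^*B),\omega(BA^*)\},
\]
and the reverse inequality is obtained by testing on vectors supported on one summand, which yields $\omega(M^2)=\max\{\omega(A^*B),\omega(BA^*)\}$.

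Finally, I would apply Berger's power inequality to $M$, which gives $\omega(M^2)\le \omega(M)^2$. Chaining this with the previous identity produces exactly
\[
\max\{\omega(A^*B),\omega(BA^*)\}=\omega(M^2)\le\omega(M)^2=\omega^2\!\left(\begin{bmatrix} O & A^* \\ B & O \end{bmatrix}\right),
\]
which is the desired conclusion.

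The argument is essentially three lines once the right tool is identified, so there is no real technical difficulty beyond recognizing that the power inequality is precisely what is needed. The only step that could be called an obstacle is the reliance on Berger's theorem, which is non-trivial to prove from scratch but is a well-established classical result in operator theory that can be cited directly.
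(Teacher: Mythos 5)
Your proof is correct: the block computation $M^2=\begin{bmatrix} A^*B & O\\ O & BA^*\end{bmatrix}$, the identity $\omega(M^2)=\max\{\omega(A^*B),\omega(BA^*)\}$ for block-diagonal operators, and Berger's power inequality $\omega(M^2)\le\omega(M)^2$ all hold and combine as you claim. The paper itself only cites this lemma from \cite[Theorem 2.3]{1} without proof, and your argument is precisely the standard one used there, so there is nothing further to compare.
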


\begin{lemma}\label{lem_farissi}\cite[Theorem 1.1]{Farissi_JMI_2010}
Let $f:[a,b]\to\mathbb{R}$ be a convex function, and let $0\leq \lambda \leq 1.$ Then
\[f\left(\frac{a+b}{2}\right)\leq l(\lambda)\leq \frac{1}{b-a}\int_{a}^{b}f(t)dt\leq\frac{f(a)+f(b)}{2},\]
where 
\[l(\lambda)=(1-\lambda) f\left(\frac{(1-\lambda)a+(1+\lambda)b}{2}\right)+\lambda f\left(\frac{(2-\lambda)a+\lambda b}{2}\right).\]

\end{lemma}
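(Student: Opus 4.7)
The plan is to view the lemma as a localized Hermite--Hadamard estimate obtained by splitting $[a,b]$ at the interior point $t:=(1-\lambda)a+\lambda b$. The driving observation is that the two arguments of $f$ appearing in $l(\lambda)$ are exactly the midpoints of the sub-intervals $[a,t]$ and $[t,b]$: a direct algebraic check gives $\tfrac{(2-\lambda)a+\lambda b}{2}=\tfrac{a+t}{2}$ and $\tfrac{(1-\lambda)a+(1+\lambda)b}{2}=\tfrac{t+b}{2}$. Moreover, these sub-intervals have lengths $\lambda(b-a)$ and $(1-\lambda)(b-a)$ respectively, which matches perfectly the weights $\lambda$ and $(1-\lambda)$ used in the definition of $l(\lambda)$.

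For the middle inequality $l(\lambda)\le\tfrac{1}{b-a}\int_a^b f(s)\,ds$, I would apply the midpoint (left-hand) form of the classical Hermite--Hadamard inequality separately on $[a,t]$ and $[t,b]$, multiply each by the corresponding weight, and add. Since the weight $\lambda$ (resp.\ $(1-\lambda)$) cancels exactly the $\tfrac{1}{t-a}$ (resp.\ $\tfrac{1}{b-t}$) normalization, the two pieces combine cleanly to recover $\tfrac{1}{b-a}\int_a^b f(s)\,ds$ on the right-hand side.

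For the leftmost inequality $f(\tfrac{a+b}{2})\le l(\lambda)$, the plan is to observe that the weighted convex combination $(1-\lambda)\tfrac{t+b}{2}+\lambda\tfrac{a+t}{2}$ simplifies (independently of $\lambda$) to $\tfrac{a+b}{2}$; Jensen's inequality applied to the convex function $f$ then yields the bound directly. The final inequality $\tfrac{1}{b-a}\int_a^b f(s)\,ds\le\tfrac{f(a)+f(b)}{2}$ is the classical trapezoidal side of Hermite--Hadamard and may simply be quoted.

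The only genuine obstacle, and it is entirely mechanical, is the bookkeeping that identifies the arguments of $l(\lambda)$ with the midpoints of the two sub-intervals and verifies that the weights match the corresponding sub-interval lengths. Once this identification is made explicit, every step of the chain reduces to either Jensen's inequality on two points or a Hermite--Hadamard estimate on a sub-interval of $[a,b]$, and the four-term inequality follows by concatenation.
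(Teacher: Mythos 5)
Your argument is correct: splitting $[a,b]$ at $t=(1-\lambda)a+\lambda b$, noting that the two arguments in $l(\lambda)$ are the midpoints of $[a,t]$ and $[t,b]$ with weights matching the relative lengths $\lambda$ and $1-\lambda$, and then using the midpoint Hermite--Hadamard bound on each piece together with Jensen's inequality for the leftmost estimate (the degenerate cases $\lambda\in\{0,1\}$ being the classical inequality) gives exactly the stated chain. The paper itself offers no proof of this lemma, quoting it from Farissi's article, and your proof is essentially the standard argument of that cited source, so nothing further is needed.
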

Following the proof of Lemma \ref{lem_farissi}, as in \cite{Farissi_JMI_2010}, we can show the operator version of the lemma.

\begin{lemma}\cite{2}\label{lem_norme}
Let $T_1,\ldots,T_n\in\mathbb{B}(\mathcal{H})$. Then
\begin{equation}\label{001}
{{\left\| \left( {{T}_{1}},\ldots ,{{T}_{n}} \right) \right\|}_{e}}\le {{\left\| \sum\limits_{i=1}^{n}{{{T}_{i}}T_{i}^{*}} \right\|}^{\frac{1}{2}}}.
\end{equation}
\end{lemma}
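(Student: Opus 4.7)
The plan is to use Dragomir's characterization \eqref{06} of $\|\cdot\|_e$ together with Cauchy--Schwarz. Fix unit vectors $x,y\in\mathcal H$. For each index $i$, write $\langle T_i x,y\rangle=\langle x,T_i^* y\rangle$ and apply the Cauchy--Schwarz inequality to obtain
\[
|\langle T_i x,y\rangle|^2 \;\le\; \|x\|^2\,\|T_i^* y\|^2 \;=\; \|T_i^* y\|^2.
\]

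Next I would sum over $i$. The right-hand side telescopes into an inner product with the target operator, namely
\[
\sum_{i=1}^n \|T_i^* y\|^2 \;=\; \sum_{i=1}^n \langle T_i T_i^* y, y\rangle \;=\; \Bigl\langle \Bigl(\sum_{i=1}^n T_i T_i^*\Bigr) y,\, y\Bigr\rangle \;\le\; \Bigl\|\sum_{i=1}^n T_i T_i^*\Bigr\|,
\]
where the last inequality follows because $\sum_{i} T_i T_i^*$ is positive and $\|y\|=1$, so its numerical range value at $y$ is bounded by its operator norm.

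Combining the two displays yields $\sum_{i=1}^n |\langle T_i x,y\rangle|^2 \le \|\sum_{i=1}^n T_i T_i^*\|$. Taking the supremum over all unit vectors $x,y$ and invoking \eqref{06} gives $\|(T_1,\ldots,T_n)\|_e^2 \le \|\sum_{i=1}^n T_i T_i^*\|$, which is \eqref{001} after taking square roots. There is no genuine obstacle here; the only subtlety is recognizing that the $y$-variable is the one that couples to $\sum_i T_i T_i^*$ (so the $x$ supremum is absorbed for free by Cauchy--Schwarz), which is why the bound is stated in terms of $T_i T_i^*$ rather than $T_i^* T_i$.
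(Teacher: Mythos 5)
Your argument is correct. Note, however, that the paper offers no proof of this lemma at all: it is quoted verbatim from Popescu \cite{2}, so there is no internal proof to compare against. Your derivation, resting on Dragomir's representation \eqref{06} together with Cauchy--Schwarz and the elementary bound $\bigl\langle \bigl(\sum_{i}T_iT_i^*\bigr)y,y\bigr\rangle\le\bigl\|\sum_{i}T_iT_i^*\bigr\|$ for unit $y$, is a clean and complete proof, and your closing observation correctly explains why $T_iT_i^*$ (rather than $T_i^*T_i$) appears: the $x$-variable is eliminated for free, while the $y$-variable carries the operator content. For comparison, the route implicit in the cited source works directly from the definition $\|(T_1,\ldots,T_n)\|_e=\sup_{(\lambda_1,\ldots,\lambda_n)\in\mathcal B_n}\|\lambda_1T_1+\cdots+\lambda_nT_n\|$: since $\|\sum_i\lambda_iT_i\|=\|\sum_i\overline{\lambda_i}\,T_i^*\|$, one estimates, for a unit vector $y$,
\[
\Bigl\|\sum_{i=1}^n\overline{\lambda_i}\,T_i^*y\Bigr\|^2\le\Bigl(\sum_{i=1}^n|\lambda_i|\,\|T_i^*y\|\Bigr)^2\le\sum_{i=1}^n|\lambda_i|^2\sum_{i=1}^n\|T_i^*y\|^2=\Bigl\langle\Bigl(\sum_{i=1}^nT_iT_i^*\Bigr)y,y\Bigr\rangle\le\Bigl\|\sum_{i=1}^nT_iT_i^*\Bigr\|,
\]
which avoids invoking \eqref{06}. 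The two arguments are of comparable length; yours has the advantage of fitting naturally into the paper's framework, where \eqref{06} is already in place and is used repeatedly, while the definition-based argument is self-contained and does not need Dragomir's characterization.
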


\begin{lemma}\cite{Kato_MathAnn_1952}\label{lemma_mixed}
(Mixed Schwarz inequality) Let $T\in\mathbb{B}(\mathcal{H})$, and let $x,y\in\mathcal{H}.$ If $0\leq \alpha\leq 1$, then
\begin{equation*} 
{{\left| \left\langle Tx,y \right\rangle  \right|}^{2}}\le \left\langle {{\left| T \right|}^{2\left( 1-\alpha \right)}}x,x \right\rangle \left\langle {{\left| {{T}^{*}} \right|}^{2\alpha}}y,y \right\rangle.
\end{equation*}
\end{lemma}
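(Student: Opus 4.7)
The plan is to adapt Kato's classical argument based on the polar decomposition of $T$. Write $T=U|T|$, where $U$ is the partial isometry associated with $T$. Using the functional calculus for the positive operator $|T|$, factor $|T|=|T|^{1-\alpha}|T|^{\alpha}$, where both factors are self-adjoint and commute. Then I would move the right-hand factor onto $y$ via adjoints:
\[
\langle Tx,y\rangle=\langle U|T|^{\alpha}|T|^{1-\alpha}x,y\rangle=\langle |T|^{1-\alpha}x,\,|T|^{\alpha}U^{*}y\rangle.
\]

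Applying the ordinary Cauchy--Schwarz inequality to this inner product gives
\[
|\langle Tx,y\rangle|^{2}\le \||T|^{1-\alpha}x\|^{2}\,\||T|^{\alpha}U^{*}y\|^{2}=\langle |T|^{2(1-\alpha)}x,x\rangle\,\langle U|T|^{2\alpha}U^{*}y,y\rangle.
\]
It then remains to identify $U|T|^{2\alpha}U^{*}$ with $|T^{*}|^{2\alpha}$. Starting from the identity $|T^{*}|^{2}=TT^{*}=U|T|^{2}U^{*}$, the intertwining relation $U\,g(|T|)\,U^{*}=g(|T^{*}|)$ for continuous $g$ with $g(0)=0$ follows from functional calculus by first checking it on monomials and then extending to $g(t)=t^{2\alpha}$, $\alpha>0$, via approximation by polynomials vanishing at the origin.

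The main subtlety that I expect is precisely this last functional-calculus step: because $U$ is only a partial isometry rather than a unitary, one must verify carefully that the intertwining holds for the fractional power $t^{2\alpha}$ on all of $\mathcal{H}$, exploiting that $g(0)=0$ kills the contribution of the kernel. The boundary values $\alpha\in\{0,1\}$ should be handled separately, where the inequality reduces directly to the ordinary Cauchy--Schwarz inequality $|\langle Tx,y\rangle|^{2}\le\langle|T|^{2}x,x\rangle\|y\|^{2}$ (or its dual). Everything else is routine algebraic manipulation.
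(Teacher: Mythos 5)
Your proof is correct: the polar-decomposition argument, the splitting $|T|=|T|^{1-\alpha}|T|^{\alpha}$, Cauchy--Schwarz, and the intertwining $U\,g(|T|)\,U^{*}=g(|T^{*}|)$ for continuous $g$ with $g(0)=0$ (justified on monomials via $U^{*}U|T|=|T|$ and then by uniform approximation) is precisely the classical Kato argument, and your separate treatment of $\alpha\in\{0,1\}$ closes the only point where the partial-isometry subtlety could bite. Note that the paper itself offers no proof of this lemma --- it is quoted directly from Kato \cite{Kato_MathAnn_1952} --- so your reconstruction is the standard proof of the cited result rather than an alternative to anything in the text.
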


\section{The numerical radius and the Euclidean operator norm}

This section presents some interesting relations between $\omega(\cdot)$ and $\|\cdot\|_e.$ The significance of these results is reflected by how they refine some celebrated results from the literature, as we shall see.

In the first result, we present an upper bound for the numerical radius of an off-diagonal operator matrix in terms of $\|\cdot\|_e.$ Then we will list some consequences of this bound. After that, we present a reversed version of this result.

\begin{theorem}\label{0}
Let ${{T}_{1}},{{T}_{2}}\in \mathbb B\left( \mathcal H \right)$. Then
\[\omega \left( \left[ \begin{matrix}
   O & {{T}_{1}}  \\
   T_{2}^{*} & O  \\
\end{matrix} \right] \right)\le \frac{1}{\sqrt{2}}\;{{\left\| \left( {{T}_{1}},{{T}_{2}} \right) \right\|}_{e}}.\]
\end{theorem}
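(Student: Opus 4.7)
The plan is to start from the Haagerup-type identity stated in Lemma \ref{lem_hirz}, applied with $A=T_{1}$ and $B=T_{2}^{*}$. This gives
\[
\omega\!\left(\left[\begin{matrix} O & T_{1} \\ T_{2}^{*} & O \end{matrix}\right]\right)=\frac{1}{2}\sup_{\theta\in\mathbb{R}}\bigl\|T_{1}+e^{\mathrm{i}\theta}T_{2}\bigr\|,
\]
so the claim is equivalent to showing that $\sup_{\theta}\|T_{1}+e^{\mathrm{i}\theta}T_{2}\|\le\sqrt{2}\,\|(T_{1},T_{2})\|_{e}$. This reformulation is the key reduction, since it trades a numerical radius for an operator norm that can be written as a supremum of two-variable sesquilinear forms.

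Next, I would use the definition $\|S\|=\sup_{\|x\|=\|y\|=1}|\langle Sx,y\rangle|$ to expand
\[
\bigl\|T_{1}+e^{\mathrm{i}\theta}T_{2}\bigr\|=\sup_{\|x\|=\|y\|=1}\bigl|\langle T_{1}x,y\rangle+e^{\mathrm{i}\theta}\langle T_{2}x,y\rangle\bigr|.
\]
To the scalar expression on the right, I apply the elementary Cauchy--Schwarz bound $|\lambda_{1}a+\lambda_{2}b|\le(|\lambda_{1}|^{2}+|\lambda_{2}|^{2})^{1/2}(|a|^{2}+|b|^{2})^{1/2}$ with $\lambda_{1}=1$ and $\lambda_{2}=e^{\mathrm{i}\theta}$, which yields the factor $\sqrt{2}$:
\[
\bigl|\langle T_{1}x,y\rangle+e^{\mathrm{i}\theta}\langle T_{2}x,y\rangle\bigr|\le\sqrt{2}\,\bigl(|\langle T_{1}x,y\rangle|^{2}+|\langle T_{2}x,y\rangle|^{2}\bigr)^{\!1/2}.
\]

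Finally, I take the supremum over unit vectors $x,y$ and invoke Dragomir's identity \eqref{06}, which is exactly what identifies the right-hand side with $\sqrt{2}\,\|(T_{1},T_{2})\|_{e}$. The bound is independent of $\theta$, so taking the supremum over $\theta$ and dividing by $2$ produces the stated inequality.

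I do not expect any serious obstacle: the only subtlety is noticing that one must work with the sesquilinear form representation of the operator norm (two unit vectors $x,y$) rather than the quadratic form (one unit vector), so that the supremum on the right really produces $\|(T_{1},T_{2})\|_{e}$ in the sense of \eqref{06}. If instead one passed to $|\langle(T_{1}+e^{\mathrm{i}\theta}T_{2})x,x\rangle|$, one would only recover $\omega_{e}$, which is strictly weaker.
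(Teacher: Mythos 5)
Your proposal is correct and follows essentially the same route as the paper: both arguments rest on Lemma \ref{lem_hirz}, the identity \eqref{06} for $\|(T_1,T_2)\|_e$, and the elementary two-term Cauchy--Schwarz (equivalently $|a+b|^2\le 2(|a|^2+|b|^2)$) applied to $\langle T_1x,y\rangle$ and $e^{\mathrm{i}\theta}\langle T_2x,y\rangle$ before taking suprema over unit vectors and over $\theta$. The only difference is cosmetic ordering (you invoke the lemma at the outset, the paper at the end), so no further comment is needed.
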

\begin{proof}
Since $\frac{\sqrt{2}}{2}\left| a+b \right|\le \sqrt{{{a}^{2}}+{{b}^{2}}}$ for any $a,b\in \mathbb{R}$, we have
\begin{equation}\label{1}
\begin{aligned}
{{\left( {{\left| \left\langle {{T}_{1}}x,y \right\rangle  \right|}^{2}}+{{\left| \left\langle {{T}_{2}}x,y \right\rangle  \right|}^{2}} \right)}^{\frac{1}{2}}}&\ge \frac{\sqrt{2}}{2}\left( \left| \left\langle {{T}_{1}}x,y \right\rangle  \right|+\left| \left\langle {{T}_{2}}x,y \right\rangle  \right| \right) \\ 
 & \ge \frac{\sqrt{2}}{2}\left| \left\langle \left( {{T}_{1}}+{{T}_{2}} \right)x,y \right\rangle  \right|  
\end{aligned}
\end{equation}
where the second inequality is obtained from the triangle inequality. Consequently, we have
\[\frac{\sqrt{2}}{2}\left| \left\langle \left( {{T}_{1}}+{{T}_{2}} \right)x,y \right\rangle  \right|\le {{\left( {{\left| \left\langle {{T}_{1}}x,y \right\rangle  \right|}^{2}}+{{\left| \left\langle {{T}_{2}}x,y \right\rangle  \right|}^{2}} \right)}^{\frac{1}{2}}}.\]
Substituting ${{T}_{2}}$ by ${{e}^{{\rm i}\theta }}{{T}_{2}}$, in the above inequality, we conclude that
\[\frac{\sqrt{2}}{2}\left| \left\langle \left( {{T}_{1}}+{{e}^{{\rm i}\theta }}{{T}_{2}} \right)x,y \right\rangle  \right|\le {{\left( {{\left| \left\langle {{T}_{1}}x,y \right\rangle  \right|}^{2}}+{{\left| \left\langle {{T}_{2}}x,y \right\rangle  \right|}^{2}} \right)}^{\frac{1}{2}}}\]
Now, by taking supremum over $x,y \in \mathcal H$ with $\left\| x \right\|=\left\| y \right\|=1$, we obtain
\[\frac{\sqrt{2}}{2}\left\| {{T}_{1}}+{{e}^{{\rm i}\theta }}{{T}_{2}} \right\|\le {{\left\| \left( {{T}_{1}},{{T}_{2}} \right) \right\|}_{e}}\]
due to \eqref{06}. Taking supremum over $\theta \in \mathbb{R}$, then implementing Lemma \ref{lem_hirz}, implies
\[\sqrt{2}\;\omega \left( \left[ \begin{matrix}
   O & {{T}_{1}}  \\
   T_{2}^{*} & O  \\
\end{matrix} \right] \right)\le {{\left\| \left( {{T}_{1}},{{T}_{2}} \right) \right\|}_{e}},\]
as required.
\end{proof}

An astonishing consequence of Theorem \ref{0} is the refinement of the second inequality in \eqref{eq_kitt_intro2}.
\begin{corollary}\label{02}
Let $T\in \mathbb B\left( \mathcal H \right)$. Then
\[{{\omega }^{2}}\left( T \right)\le \frac{1}{2}\left\| \left( T,{{T}^{*}} \right) \right\|_{e}^{2}\le \frac{1}{2}\left\| T{{T}^{*}}+{{T}^{*}}T \right\|.\]
\end{corollary}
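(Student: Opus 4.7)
The plan is to derive both inequalities by specializing Theorem \ref{0} to the pair $(T_1,T_2)=(T,T^*)$ and then invoking Lemma \ref{lem_norme}.

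First, I would observe that with $T_1=T$ and $T_2=T^*$, the hypothesis $T_2^*=T$ makes the off-diagonal block matrix in Theorem \ref{0} equal to $\left[\begin{smallmatrix} O & T \\ T & O \end{smallmatrix}\right]$. The key identity to record is
\[\omega\!\left(\left[\begin{matrix} O & T \\ T & O \end{matrix}\right]\right)=\omega(T),\]
which follows from unitary equivalence via $U=\tfrac{1}{\sqrt 2}\left[\begin{smallmatrix} I & I \\ I & -I \end{smallmatrix}\right]$ that diagonalizes this block matrix into $T\oplus(-T)$, together with the invariance of $\omega$ under unitary similarity and under replacing an operator by its negative. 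Substituting this identity into the inequality supplied by Theorem \ref{0} and squaring yields
\[\omega^{2}(T)\le \tfrac{1}{2}\,\|(T,T^*)\|_{e}^{2},\]
which is the first inequality in the corollary.

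For the second inequality, I would apply Lemma \ref{lem_norme} directly to the pair $(T_1,T_2)=(T,T^*)$. The lemma gives
\[\|(T,T^*)\|_{e}\le \bigl\|\,T T^{*}+T^{*}T\,\bigr\|^{1/2},\]
and squaring produces $\|(T,T^*)\|_{e}^{2}\le \|TT^{*}+T^{*}T\|$. Multiplying by $\tfrac{1}{2}$ and chaining with the first inequality completes the proof.

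Neither step involves a genuine obstacle: the first inequality is really a one-line consequence of Theorem \ref{0} once the identification of $\omega$ of the off-diagonal matrix with $\omega(T)$ is noted, and the second is a direct instance of Lemma \ref{lem_norme}. The only small subtlety worth being explicit about in the write-up is the calculation of $\omega\!\left(\left[\begin{smallmatrix} O & T \\ T & O \end{smallmatrix}\right]\right)$; everything else is a mechanical combination of previously stated tools.
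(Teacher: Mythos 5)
Your proposal is correct and follows essentially the same route as the paper: specialize Theorem \ref{0} and Lemma \ref{lem_norme} to the pair $(T,T^*)$ and use the identity $\omega\!\left(\left[\begin{smallmatrix} O & T \\ T & O \end{smallmatrix}\right]\right)=\omega(T)$, which the paper invokes without proof and you justify correctly via the unitary $\tfrac{1}{\sqrt 2}\left[\begin{smallmatrix} I & I \\ I & -I \end{smallmatrix}\right]$.
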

\begin{proof}
By Theorem \ref{0} and Lemma \ref{lem_norme}, we infer that
\begin{equation*}
{{\omega }^{2}}\left( \left[ \begin{matrix}
   O & {{T}_{1}}  \\
   T_{2}^{*} & O  \\
\end{matrix} \right] \right)\le \frac{1}{2}\left\| \left( {{T}_{1}},{{T}_{2}} \right) \right\|_{e}^{2}\le \frac{1}{2}\left\| {{T}_{1}}T_{1}^{*}+{{T}_{2}}T_{2}^{*} \right\|.
\end{equation*}
Now letting ${{T}_{1}}=T$ and ${{T}_{2}}={{T}^{*}}$. We get 
\[\begin{aligned}
   {{\omega }^{2}}\left( T \right)&={{\omega }^{2}}\left( \left[ \begin{matrix}
   O & T  \\
   T & O  \\
\end{matrix} \right] \right) \\ 
 & \le \frac{1}{2}\left\| \left( T,{{T}^{*}} \right) \right\|_{e}^{2} \\ 
 & \le \frac{1}{2}\left\| T{{T}^{*}}+{{T}^{*}}T \right\|, 
\end{aligned}\]
as required.
\end{proof}

\begin{remark}
Among the sharpest upper bounds for $\omega \left( \left[ \begin{matrix}
   O & {{T}_{1}}  \\
   T_{2}^{*} & O  \\
\end{matrix} \right] \right)$ is $\frac{\|T_1\|+\|T_2\|}{2}$. Indeed, this can be seen easily from Lemma \ref{lem_hirz} and the triangle inequality for the usual operator norm.

In this remark, we give an example to support the significance of the bound we found in Corollary \ref{02}. Indeed, if we let 
\[T_1=\left[
\begin{array}{cc}
 4 & 5 \\
 2 & 0 \\
\end{array}
\right], T_2=\left[
\begin{array}{cc}
 4 & 4 \\
 5 & 5 \\
\end{array}
\right],\]
then
\[\left( \frac{\|T_1\|+\|T_2\|}{2}    \right)^2\approx 60.7349\;{\text{and}}\;\frac{1}{2}\left\| T{{T}^{*}}+{{T}^{*}}T \right\|\approx 56.2155.\]
This shows that the bound found in Corollary \ref{02} is significant. However, if we let
\[T_1=\left[
\begin{array}{cc}
 4 & 2 \\
 0 & 4 \\
\end{array}
\right], T_2=\left[
\begin{array}{cc}
 3 & 1 \\
 1 & 0 \\
\end{array}
\right],\]
we have
\[\left( \frac{\|T_1\|+\|T_2\|}{2}    \right)^2\approx 17.7489\;{\text{and}}\;\frac{1}{2}\left\| T{{T}^{*}}+{{T}^{*}}T \right\|\approx 18.1385.\]
These two examples show that neither $\left( \frac{\|T_1\|+\|T_2\|}{2}    \right)^2$ nor 
$\frac{1}{2}\left\| T{{T}^{*}}+{{T}^{*}}T \right\|$ is uniformly better than the other, as an upper bound of $\omega^2 \left( \left[ \begin{matrix}
   O & {{T}_{1}}  \\
   T_{2}^{*} & O  \\
\end{matrix} \right] \right).$
\end{remark}

A well-known inequality for the numerical radius of the product of two operators has been shown in \cite[Remark 1]{Ki1}, where for $A,B\in\mathbb{B}(\mathcal{H})$, one has
	\begin{equation}\label{eq_kitt3}
		\omega \left( {{B}^{*}}A \right)\le\frac{1}{2}\left\| {{\left| A \right|}^{2}}+{{\left| B \right|}^{2}} \right\|.
	\end{equation}

	Interestingly, Theorem \ref{0} implies the following refinement of this inequality via $\|\cdot\|_e$.
\begin{corollary}
Let $A,B\in \mathbb B\left( \mathcal H \right)$. Then
\[\omega \left( {{B}^{*}}A \right)\le \frac{1}{2}\left\| \left( A,B \right) \right\|_{e}^{2}\le \frac{1}{2}\left\| {{\left| A \right|}^{2}}+{{\left| B \right|}^{2}} \right\|.\]
\end{corollary}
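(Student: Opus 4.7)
The plan is to combine Theorem \ref{0} with Lemma \ref{lem_prod} and Lemma \ref{lem_norme}, together with the adjoint-symmetry of $\|\cdot\|_{e}$ read off from the identity \eqref{06}.

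First, I would reduce the product $B^{*}A$ to an off-diagonal operator matrix via Lemma \ref{lem_prod}. Matching the roles so that the product $YX^{*}$ (from that lemma) equals $B^{*}A$, i.e.\ taking $Y=B^{*}$ and $X^{*}=A$, yields
\[
\omega\left(B^{*}A\right)\le \omega^{2}\!\left(\begin{bmatrix} O & A \\ B^{*} & O \end{bmatrix}\right).
\]
Then I would apply Theorem \ref{0} with $T_{1}=A$ and $T_{2}=B$ to estimate the right-hand side by $\frac{1}{2}\|(A,B)\|_{e}^{2}$, which gives the first inequality of the corollary.

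For the second inequality, I would use Lemma \ref{lem_norme}, but with a small adjoint twist. Applying Lemma \ref{lem_norme} directly to $(A,B)$ produces $\|AA^{*}+BB^{*}\|=\||A^{*}|^{2}+|B^{*}|^{2}\|$, which is the ``wrong'' side. The fix is to observe from \eqref{06} that, since $|\langle T_{i}x,y\rangle|=|\langle T_{i}^{*}y,x\rangle|$, interchanging $x$ and $y$ in the supremum gives
\[
\|(A,B)\|_{e}=\|(A^{*},B^{*})\|_{e}.
\]
Applying Lemma \ref{lem_norme} to $(A^{*},B^{*})$ then yields
\[
\|(A,B)\|_{e}^{2}=\|(A^{*},B^{*})\|_{e}^{2}\le \|A^{*}A+B^{*}B\|=\||A|^{2}+|B|^{2}\|,
\]
completing the chain.

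The main obstacle is essentially bookkeeping: one must align the adjoints in Lemma \ref{lem_prod} so that the off-diagonal matrix matches the shape required by Theorem \ref{0}, and then remember to pass to adjoints before invoking Lemma \ref{lem_norme} so that $|A|^{2}+|B|^{2}$ (rather than $|A^{*}|^{2}+|B^{*}|^{2}$) appears on the right. Once these two alignments are made, no further computation is needed.
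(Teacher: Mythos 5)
Your proposal is correct and takes essentially the same route as the paper: both combine Lemma \ref{lem_prod}, Theorem \ref{0} together with Lemma \ref{lem_norme}, and the adjoint-invariance $\left\|(T_1,\ldots,T_n)\right\|_e=\left\|(T_1^*,\ldots,T_n^*)\right\|_e$. The only difference is bookkeeping: the paper takes $T_1=B^*$, $T_2=A^*$ and converts the middle term back to $\|(B,A)\|_e$, while you take $T_1=A$, $T_2=B$ and pass to adjoints only before invoking Lemma \ref{lem_norme}; both alignments are valid.
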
	
\begin{proof}
If $T_1,T_2\in\mathbb{B}(\mathcal{H}),$ Lemma \ref{lem_prod} implies
\[\max \left\{ \omega \left( T_{2}^{*}{{T}_{1}} \right),\omega \left( {{T}_{1}}T_{2}^{*} \right) \right\}\le {{\omega }^{2}}\left( \left[ \begin{matrix}
   O & {{T}_{1}}  \\
   T_{2}^{*} & O  \\
\end{matrix} \right] \right).\]
Thus, from \eqref{001}, we obtain	
\[\max \left\{ \omega \left( T_{2}^{*}{{T}_{1}} \right),\omega \left( {{T}_{1}}T_{2}^{*} \right) \right\}\le \frac{1}{2}\left\| \left( {{T}_{1}},{{T}_{2}} \right) \right\|_{e}^{2}\le \frac{1}{2}\left\| {{T}_{1}}T_{1}^{*}+{{T}_{2}}T_{2}^{*} \right\|.\]
In particular, if we set ${{T}_{1}}={{B}^{*}}$, ${{T}_{2}}={{A}^{*}}$, and use the fact that
\[{{\left\| \left( {{T}_{1}},\ldots ,{{T}_{n}} \right) \right\|}_{e}}={{\left\| \left( T_{1}^{*},\ldots ,T_{n}^{*} \right) \right\|}_{e}},\]	
we infer that
\[\omega \left( {{B}^{*}}A \right)\le \frac{1}{2}\left\| \left( B,A \right) \right\|_{e}^{2}\le \frac{1}{2}\left\| {{\left| A \right|}^{2}}+{{\left| B \right|}^{2}} \right\|,\]
as required.
\end{proof}

Due to the role of operator matrices in studying the numerical radius, we present the following important bound.
\begin{corollary}\label{04}
Let $T\in \mathbb B\left( \mathcal H \right)$. Then
\[\omega \left( \left[ \begin{matrix}
   O & \Re T  \\
   \Im T & O  \\
\end{matrix} \right] \right)\le \frac{1}{2}{{\left\| {{T}^{*}}T+T{{T}^{*}} \right\|}^{\frac{1}{2}}},\]
where $\Re T$ and $\Im T$ denote the real and imaginary parts of $T$, respectively.
\end{corollary}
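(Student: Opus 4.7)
The plan is to apply Theorem \ref{0} directly to the pair $(\Re T, \Im T)$ and then invoke Lemma \ref{lem_norme}, reducing everything to the classical identity $(\Re T)^2+(\Im T)^2=\tfrac12(T^*T+TT^*)$.

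First, I would observe that both $\Re T=\tfrac12(T+T^*)$ and $\Im T=\tfrac{1}{2{\rm i}}(T-T^*)$ are self-adjoint, so setting $T_1=\Re T$ and $T_2=\Im T$ in Theorem \ref{0} gives $T_2^*=\Im T$ and hence
\[
\omega \left( \left[ \begin{matrix} O & \Re T \\ \Im T & O \end{matrix} \right] \right)\le \frac{1}{\sqrt{2}}\,\|(\Re T,\Im T)\|_{e}.
\]
Next, Lemma \ref{lem_norme} applied to the pair $(\Re T,\Im T)$ yields
\[
\|(\Re T,\Im T)\|_{e}\le \bigl\|(\Re T)(\Re T)^{*}+(\Im T)(\Im T)^{*}\bigr\|^{\frac12}=\bigl\|(\Re T)^{2}+(\Im T)^{2}\bigr\|^{\frac12},
\]
using self-adjointness again.

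The final step is the algebraic identity $(\Re T)^{2}+(\Im T)^{2}=\tfrac12(T^{*}T+TT^{*})$, obtained by expanding $\tfrac14(T+T^*)^2-\tfrac14(T-T^*)^2$ and noticing that the $T^2$ and $(T^*)^2$ terms cancel while the cross terms double. Inserting this identity into the previous display gives
\[
\|(\Re T,\Im T)\|_{e}\le \frac{1}{\sqrt{2}}\,\|T^{*}T+TT^{*}\|^{\frac12},
\]
and combining with the first inequality produces the desired bound with the constant $\tfrac12$.

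Since every step is an application of a result already established in the excerpt together with one short expansion, I do not foresee a serious obstacle; the only place where one could slip is verifying that the choice $T_2=\Im T$ indeed produces $T_2^*=\Im T$ in the off-diagonal matrix of Theorem \ref{0}, which is immediate from the self-adjointness of $\Im T$.
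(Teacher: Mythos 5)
Your proposal is correct and follows essentially the same route as the paper: apply Theorem \ref{0} to the pair $(\Re T,\Im T)$ (using that $\Im T$ is self-adjoint), bound $\|(\Re T,\Im T)\|_e$ by Lemma \ref{lem_norme}, and finish with the identity $(\Re T)^2+(\Im T)^2=\tfrac12(T^*T+TT^*)$. All steps, including the constant bookkeeping $\tfrac{1}{\sqrt2}\cdot\tfrac{1}{\sqrt2}=\tfrac12$, check out.
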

\begin{proof}
We have
	\[\begin{aligned}
   \omega \left( \left[ \begin{matrix}
   O & \Re T  \\
   \Im T & O  \\
\end{matrix} \right] \right)&\le \frac{1}{\sqrt{2}}{{\left\| \left( \Re T,\Im T \right) \right\|}_{e}} \quad \text{(by Theorem \ref{0})}\\ 
 & \le \frac{1}{\sqrt{2}}{{\left\| {{\left( \Re T \right)}^{2}}+{{\left( \Im T \right)}^{2}} \right\|}^{\frac{1}{2}}} \quad \text{(by \eqref{001})}\\ 
 & =\frac{1}{\sqrt{2}}{{\left\| \frac{{{T}^{*}}T+T{{T}^{*}}}{2} \right\|}^{\frac{1}{2}}} \\ 
 & =\frac{1}{2}{{\left\| {{T}^{*}}T+T{{T}^{*}} \right\|}^{\frac{1}{2}}},  
\end{aligned}\]
as required.
\end{proof}

The significance of Corollary \ref{04} is explained in the following remark.
\begin{remark}
It has been shown in \cite[Corollary 2.5]{1} that
\begin{equation}\label{03}
\omega \left( \left[ \begin{matrix}
   O & \Re T  \\
   \Im T & O  \\
\end{matrix} \right] \right)\le \omega \left( T \right).
\end{equation}
It follows from Corollary \ref{04} that
	\[{{\omega }^{2}}\left( \left[ \begin{matrix}
   O & \Re T  \\
   \Im T & O  \\
\end{matrix} \right] \right)\le \frac{1}{4}\left\| {{T}^{*}}T+T{{T}^{*}} \right\|\le {{\omega }^{2}}\left( T \right)\]
where we have used \eqref{eq_kitt_intro2} to obtain the second inequality in the above argument. Accordingly, Corollary \ref{04} provides an improvement of \eqref{03}.
\end{remark}

On the other hand, a lower bound for the numerical radius of an operator matrix may be stated in terms of $\|\cdot\|_e$ as follows.
\begin{theorem}\label{08}
Let ${{T}_{1}},{{T}_{2}}\in \mathbb B\left( \mathcal H \right)$. Then
\[\frac{1}{2}\;{{\left\| \left( {{T}_{1}},{{T}_{2}} \right) \right\|}_{e}}\le \omega \left( \left[ \begin{matrix}
   O & {{T}_{1}}  \\
   T_{2}^{*} & O  \\
\end{matrix} \right] \right).\]
\end{theorem}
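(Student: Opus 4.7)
The plan is to run the argument of Theorem \ref{0} in reverse: instead of using the elementary inequality $\tfrac{\sqrt{2}}{2}|a+b|\le\sqrt{a^{2}+b^{2}}$, I will exploit the opposite comparison $|a|^{2}+|b|^{2}\le(|a|+|b|)^{2}$, combined with the phase freedom in Lemma~\ref{lem_hirz}, to bound $\|(T_1,T_2)\|_e$ from above by $\sup_{\theta}\|T_1+e^{{\rm i}\theta}T_2\|$.

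\medskip

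\noindent\textbf{Step 1 (rewrite the right-hand side).} By Lemma~\ref{lem_hirz} applied with $A=T_{1}$ and $B=T_{2}^{*}$ (so that $B^{*}=T_{2}$),
\[
\omega\!\left(\begin{bmatrix}O&T_{1}\\ T_{2}^{*}&O\end{bmatrix}\right)=\frac{1}{2}\sup_{\theta\in\mathbb{R}}\|T_{1}+e^{{\rm i}\theta}T_{2}\|.
\]

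\noindent\textbf{Step 2 (pointwise estimate in $x,y$).} Fix unit vectors $x,y\in\mathcal{H}$ and set $a=\langle T_{1}x,y\rangle$, $b=\langle T_{2}x,y\rangle$. Choose $\theta_{0}=\theta_{0}(x,y)\in\mathbb{R}$ so that $e^{{\rm i}\theta_{0}}b\,\overline{a}=|a||b|$; then
\[
|a+e^{{\rm i}\theta_{0}}b|=|a|+|b|\ge\bigl(|a|^{2}+|b|^{2}\bigr)^{1/2}.
\]
Using $|a+e^{{\rm i}\theta_{0}}b|=|\langle(T_{1}+e^{{\rm i}\theta_{0}}T_{2})x,y\rangle|\le\|T_{1}+e^{{\rm i}\theta_{0}}T_{2}\|\le\sup_{\theta}\|T_{1}+e^{{\rm i}\theta}T_{2}\|$ yields
\[
\bigl(|\langle T_{1}x,y\rangle|^{2}+|\langle T_{2}x,y\rangle|^{2}\bigr)^{1/2}\le\sup_{\theta\in\mathbb{R}}\|T_{1}+e^{{\rm i}\theta}T_{2}\|.
\]

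\noindent\textbf{Step 3 (pass to suprema).} Taking the supremum over $\|x\|=\|y\|=1$ on the left and invoking \eqref{06} gives
\[
\|(T_{1},T_{2})\|_{e}\le\sup_{\theta\in\mathbb{R}}\|T_{1}+e^{{\rm i}\theta}T_{2}\|,
\]
which, combined with Step~1, is exactly the claimed inequality.

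\medskip

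\noindent\textbf{Anticipated obstacle.} There is no deep obstruction here, since the crucial point is merely that the scalar-level inequality flips direction — one replaces $\tfrac{\sqrt{2}}{2}|a+b|\le\sqrt{a^{2}+b^{2}}$ (used for the upper bound in Theorem~\ref{0}) by $\sqrt{|a|^{2}+|b|^{2}}\le|a|+|b|$, achieved by aligning the phase of $e^{{\rm i}\theta_{0}}b$ with $a$. The only subtle point is that $\theta_{0}$ depends on $x,y$; this is harmless because the supremum over $\theta$ is taken on the right, so a different $\theta_{0}$ may be chosen for each $(x,y)$ before passing to the supremum.
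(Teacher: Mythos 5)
Your proof is correct, and it takes a genuinely different (and more direct) route than the paper. The paper starts from the parallelogram identity $|a|^{2}+|b|^{2}=\tfrac{1}{2}\left(|a+b|^{2}+|a-b|^{2}\right)$, deduces $\|(T_1,T_2)\|_e\le\sqrt{\tfrac{1}{2}\left(\|T_1+e^{{\rm i}\theta}T_2\|^{2}+\|T_1-e^{{\rm i}\theta}T_2\|^{2}\right)}$, and then bounds each of the two norms by $2\,\omega\!\left(\left[\begin{smallmatrix}O&T_1\\ T_2^{*}&O\end{smallmatrix}\right]\right)$ via Lemma \ref{lem_hirz}, using the observation that replacing $T_2^{*}$ by $-T_2^{*}$ does not change the numerical radius of the off-diagonal block matrix. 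You instead choose, for each pair of unit vectors $(x,y)$, a phase $\theta_0(x,y)$ aligning $\langle T_1x,y\rangle$ and $e^{{\rm i}\theta_0}\langle T_2x,y\rangle$, so that $\sqrt{|a|^{2}+|b|^{2}}\le|a|+|b|=|\langle(T_1+e^{{\rm i}\theta_0}T_2)x,y\rangle|\le\sup_{\theta}\|T_1+e^{{\rm i}\theta}T_2\|$, and then take the supremum over $x,y$ and invoke \eqref{06} and Lemma \ref{lem_hirz}. Your handling of the $(x,y)$-dependence of $\theta_0$ is exactly right: since the right-hand side is already a supremum over $\theta$, a pointwise choice is harmless. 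What each approach buys: yours is shorter and avoids both the parallelogram law and the sign-flip step; the paper's route has the side benefit of producing the identity \eqref{07}, $\tfrac{\sqrt{2}}{2}\|(T_1+T_2,T_1-T_2)\|_e=\|(T_1,T_2)\|_e$, which is reused in the proof of Corollary \ref{cor_blocks}, so the extra machinery is not wasted there.
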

\begin{proof}
The classical parallelogram law for complex numbers asserts that
	\[{{\left| a \right|}^{2}}+{{\left| b \right|}^{2}}=\frac{{{\left| a+b \right|}^{2}}+{{\left| a-b \right|}^{2}}}{2};\text{ }\left( a,b\in \mathbb{C} \right).\]
From the above identity, one can get
\begin{equation}\label{05}
{{\left| \left\langle {{T}_{1}}x,y \right\rangle  \right|}^{2}}+{{\left| \left\langle {{T}_{2}}x,y \right\rangle  \right|}^{2}}=\frac{1}{2}\left( {{\left| \left\langle \left( {{T}_{1}}+{{T}_{2}} \right)x,y \right\rangle  \right|}^{2}}+{{\left| \left\langle \left( {{T}_{1}}-{{T}_{2}} \right)x,y \right\rangle  \right|}^{2}} \right)
\end{equation}
for any $x,y\in \mathcal H$. Hence,
\begin{equation}\label{00}
{{\left\| \left( {{T}_{1}},{{T}_{2}} \right) \right\|}_{e}}\le \sqrt{\frac{{{\left\| {{T}_{1}}+{{T}_{2}} \right\|}^{2}}+{{\left\| {{T}_{1}}-{{T}_{2}} \right\|}^{2}}}{2}}.
\end{equation}
Now, replacing ${{T}_{2}}$ by ${{e}^{{\rm i}\theta }}{{T}_{2}}$, in \eqref{00}, we get
\begin{equation}\label{00000}
{{\left\| \left( {{T}_{1}},{{T}_{2}} \right) \right\|}_{e}}\le \sqrt{\frac{{{\left\| {{T}_{1}}+{{e}^{{\rm i}\theta }}{{T}_{2}} \right\|}^{2}}+{{\left\| {{T}_{1}}-{{e}^{{\rm i}\theta }}{{T}_{2}} \right\|}^{2}}}{2}}.
\end{equation}
Notice that, by Lemma \ref{lem_hirz},
\begin{equation}\label{000}
\frac{1}{2}\left\| {{T}_{1}}+{{e}^{{\rm i}\theta }}{{T}_{2}} \right\|\le \omega \left( \left[ \begin{matrix}
   O & {{T}_{1}}  \\
   T_{2}^{*} & O  \\
\end{matrix} \right] \right).
\end{equation}
On the other hand, by replacing ${{T}_{2}}$ by $-{{T}_{2}}$, in \eqref{000}, we obtain
\begin{equation}\label{0000}
\frac{1}{2}\left\| {{T}_{1}}-{{e}^{{\rm i}\theta }}{{T}_{2}} \right\|\le \omega \left( \left[ \begin{matrix}
   O & {{T}_{1}}  \\
   -T_{2}^{*} & O  \\
\end{matrix} \right] \right)=\omega \left( \left[ \begin{matrix}
   O & {{T}_{1}}  \\
   T_{2}^{*} & O  \\
\end{matrix} \right] \right).
\end{equation}
Now, combining the relations \eqref{000} and \eqref{0000} with \eqref{00000}, implies
	\[{{\left\| \left( {{T}_{1}},{{T}_{2}} \right) \right\|}_{e}}\le 2\omega \left( \left[ \begin{matrix}
   O & {{T}_{1}}  \\
   T_{2}^{*} & O  \\
\end{matrix} \right] \right),\]
as required.
\end{proof}

Related to the bounds we found in Theorems \ref{0} and \ref{08}, we have the following.
\begin{corollary}\label{cor_blocks}
Let ${{T}_{1}},{{T}_{2}}\in \mathbb B\left( \mathcal H \right)$. Then
\[\omega \left( \left[ \begin{matrix}
   O & \frac{{{T}_{1}}+{{T}_{2}}}{2}  \\
   \frac{T_{1}^{*}-T_{2}^{*}}{2} & O  \\
\end{matrix} \right] \right)\le \frac{1}{2}{{\left\| \left( {{T}_{1}},{{T}_{2}} \right) \right\|}_{e}}\le \omega \left( \left[ \begin{matrix}
   O & {{T}_{1}}  \\
   T_{2}^{*} & O  \\
\end{matrix} \right] \right),\]
and
\[\frac{1}{2}\omega \left( \left[ \begin{matrix}
   O & {{T}_{1}}  \\
   T_{2}^{*} & O  \\
\end{matrix} \right] \right)\le \frac{\sqrt{2}}{4}{{\left\| \left( {{T}_{1}},{{T}_{2}} \right) \right\|}_{e}}\le \omega \left( \left[ \begin{matrix}
   O & \frac{{{T}_{1}}+{{T}_{2}}}{2}  \\
   \frac{T_{1}^{*}-T_{2}^{*}}{2} & O  \\
\end{matrix} \right] \right).\]
\end{corollary}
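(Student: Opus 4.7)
The plan is to reduce everything to Theorems \ref{0} and \ref{08} applied to the transformed pair $\left(\frac{T_1+T_2}{2},\frac{T_1-T_2}{2}\right)$, using a parallelogram-type identity for $\|\cdot\|_e$.

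The first step I would carry out is to establish the key identity
\[\left\|\left(\tfrac{T_1+T_2}{2},\tfrac{T_1-T_2}{2}\right)\right\|_e = \tfrac{1}{\sqrt 2}\,\|(T_1,T_2)\|_e.\]
This follows immediately from applying the complex parallelogram law to the scalars $a=\langle\frac{T_1+T_2}{2}x,y\rangle$ and $b=\langle\frac{T_1-T_2}{2}x,y\rangle$, exactly as in the computation leading to \eqref{05}; one gets $|a|^2+|b|^2=\tfrac12\bigl(|\langle T_1x,y\rangle|^2+|\langle T_2x,y\rangle|^2\bigr)$, and taking supremum over unit $x,y$ produces the identity via \eqref{06}.

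For the first chain, the right inequality $\tfrac12\|(T_1,T_2)\|_e\le\omega\!\left(\left[\begin{smallmatrix} O & T_1 \\ T_2^* & O\end{smallmatrix}\right]\right)$ is just Theorem \ref{08} restated. For the left inequality, I apply Theorem \ref{0} with the substitution $(T_1,T_2)\mapsto\left(\frac{T_1+T_2}{2},\frac{T_1-T_2}{2}\right)$ to obtain
\[\omega\!\left(\!\left[\begin{matrix} O & \tfrac{T_1+T_2}{2} \\ \tfrac{T_1^*-T_2^*}{2} & O\end{matrix}\right]\!\right)\le\tfrac{1}{\sqrt 2}\left\|\left(\tfrac{T_1+T_2}{2},\tfrac{T_1-T_2}{2}\right)\right\|_e,\]
and then invoke the identity above to replace the right-hand side by $\tfrac{1}{\sqrt 2}\cdot\tfrac{1}{\sqrt 2}\|(T_1,T_2)\|_e=\tfrac12\|(T_1,T_2)\|_e$.

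For the second chain, the left inequality $\tfrac12\omega\!\left(\left[\begin{smallmatrix} O & T_1 \\ T_2^* & O\end{smallmatrix}\right]\right)\le\tfrac{\sqrt2}{4}\|(T_1,T_2)\|_e$ is Theorem \ref{0} after multiplying through by $\tfrac12$, since $\tfrac12\cdot\tfrac{1}{\sqrt2}=\tfrac{\sqrt2}{4}$. The right inequality comes from applying Theorem \ref{08} to the pair $\left(\frac{T_1+T_2}{2},\frac{T_1-T_2}{2}\right)$, which yields $\tfrac12\left\|\left(\tfrac{T_1+T_2}{2},\tfrac{T_1-T_2}{2}\right)\right\|_e$ on the left; using the identity once more converts this into $\tfrac12\cdot\tfrac{1}{\sqrt 2}\|(T_1,T_2)\|_e=\tfrac{\sqrt2}{4}\|(T_1,T_2)\|_e$, as required. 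There is no real obstacle here: the whole corollary is a clean substitution together with the parallelogram identity for $\|\cdot\|_e$, so the only thing to verify carefully is the factor $\tfrac{1}{\sqrt 2}$ in the identity.
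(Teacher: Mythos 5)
Your proof is correct and follows essentially the same route as the paper: both rest on the parallelogram identity relating $\left\|\left(\frac{T_1+T_2}{2},\frac{T_1-T_2}{2}\right)\right\|_e$ to $\frac{1}{\sqrt{2}}\left\|\left(T_1,T_2\right)\right\|_e$ (the paper's \eqref{07}, up to homogeneity of $\|\cdot\|_e$) and then apply Theorems \ref{0} and \ref{08} to the original and transformed pairs. The only cosmetic difference is that you work directly with the halved pair while the paper works with $(T_1+T_2,T_1-T_2)$ and rescales at the end.
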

\begin{proof}
It follows from \eqref{05} that
\begin{equation}\label{07}
\frac{\sqrt{2}}{2}{{\left\| \left( {{T}_{1}}+{{T}_{2}},{{T}_{1}}-{{T}_{2}} \right) \right\|}_{e}}={{\left\| \left( {{T}_{1}},{{T}_{2}} \right) \right\|}_{e}}.
\end{equation}
Therefore,
\[\begin{aligned}
   \omega \left( \left[ \begin{matrix}
   O & {{T}_{1}}+{{T}_{2}}  \\
   {{\left( {{T}_{1}}-{{T}_{2}} \right)}^{*}} & O  \\
\end{matrix} \right] \right)&\le \frac{\sqrt{2}}{2}{{\left\| \left( {{T}_{1}}+{{T}_{2}},{{T}_{1}}-{{T}_{2}} \right) \right\|}_{e}} \quad \text{(by Theorem \ref{0})}\\ 
 & ={{\left\| \left( {{T}_{1}},{{T}_{2}} \right) \right\|}_{e}} \quad \text{(by \eqref{07})}\\ 
 & \le 2\omega \left( \left[ \begin{matrix}
   O & {{T}_{1}}  \\
   T_{2}^{*} & O  \\
\end{matrix} \right] \right) \quad \text{(by Theorem \ref{08})}. 
\end{aligned}\]
This proves the first inequality. To prove the second inequality, notice that
\[\begin{aligned}
   \sqrt{2}\omega \left( \left[ \begin{matrix}
   O & {{T}_{1}}  \\
   T_{2}^{*} & O  \\
\end{matrix} \right] \right)&\le {{\left\| \left( {{T}_{1}},{{T}_{2}} \right) \right\|}_{e}} \quad \text{(by Theorem \ref{0})}\\ 
 & =\frac{\sqrt{2}}{2}{{\left\| \left( {{T}_{1}}+{{T}_{2}},{{T}_{1}}-{{T}_{2}} \right) \right\|}_{e}} \quad \text{(by \eqref{07})}\\ 
 & \le \sqrt{2}\omega \left( \left[ \begin{matrix}
   O & {{T}_{1}}+{{T}_{2}}  \\
   {{\left( {{T}_{1}}-{{T}_{2}} \right)}^{*}} & O  \\
\end{matrix} \right] \right)  \quad \text{(by Theorem \ref{08})}.
\end{aligned}\]
This completes the proof.
\end{proof}
\begin{remark}
Let $T\in\mathbb{B}(\mathcal{H}).$ If we let $T_1=T_2=T$ in Corollary \ref{cor_blocks}, we obtain
\[\omega \left( \left[ \begin{matrix}
   O & T  \\
   O & O  \\
\end{matrix} \right] \right)\leq \frac{1}{2}\|(T,T)\|_e\leq \omega \left( \left[ \begin{matrix}
   O & T  \\
   T^{*} & O  \\
\end{matrix} \right] \right), \]
and
\[\frac{1}{2}\omega \left( \left[ \begin{matrix}
   O & T  \\
   T^{*} & O  \\
\end{matrix} \right] \right)\leq \frac{\sqrt{2}}{4}\|(T,T)\|_e\leq \omega \left( \left[ \begin{matrix}
   O & T  \\
  O & O  \\
\end{matrix} \right] \right).\]
This implies that
\[\frac{1}{2}\|T\|\leq \frac{1}{2}\|(T,T)\|_e\leq \|T\|,\]
and 
\[\frac{1}{2}\|T\|\leq \frac{\sqrt{2}}{4}\|(T,T)\|_e\leq \frac{1}{2}\|T\|.\]
From this, we infer that for any $T\in\mathbb{B}(\mathcal{H})$, one has the identity $\sqrt{2}\;\|T\|=\|(T,T)\|_e.$

Notice that, from the definition of $\|\cdot\|_e$, this implies that subject to the constraint $\lambda_1,\lambda_2\in\mathbb{C}$ are such that $|\lambda_1|^2+|\lambda_2|^2=1,$ the maximum value of $|\lambda_1+\lambda_2|$ is $\sqrt{2}.$

\end{remark}

From \cite[Corollary 2.5]{1}, we know that
 \[\frac{1}{2}\omega \left( T \right)\le \omega \left( \left[ \begin{matrix}
   O & \Re T  \\
   \Im T & O  \\
\end{matrix} \right] \right)\le \omega \left( T \right).\]
For $T\in\mathbb{B}(\mathcal{H})$, if we let $T_1=T^*$ and $T_2=T$ in Corollary \ref{cor_blocks}, we reach the following form, by noting that $\omega\left(\left[\begin{matrix} O&T_1\\e^{\textup{i}\theta}T_2&O\end{matrix}\right]\right)=\omega\left(\left[\begin{matrix} O&T_1\\T_2&O\end{matrix}\right]\right).$
\begin{corollary}\label{cor_blockss}
Let $T\in\mathbb{B}(\mathcal{H}).$ Then
\[\omega\left(\left[\begin{matrix} O&\Re T\\ \ \Im T&O\end{matrix}\right]\right)\leq \frac{1}{2}\left\|(T^*,T)\right\|_e\leq \omega(T),\]
and
\[\frac{1}{2}\omega(T)\leq  \frac{\sqrt{2}}{4}\left\|(T^*,T)\right\|_e\leq \omega\left(\left[\begin{matrix} O&\Re T\\  \Im T&O\end{matrix}\right]\right).\]
\end{corollary}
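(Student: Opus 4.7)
The plan is to specialize Corollary \ref{cor_blocks} to the pair $(T_1,T_2)=(T^{*},T)$ and then simplify both sides of the resulting chain using standard manipulations of Cartesian decompositions and the phase invariance of the off-diagonal numerical radius already remarked on just before the statement.

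First I would substitute $T_1=T^{*}$ and $T_2=T$ into the two-sided inequality of Corollary \ref{cor_blocks}. The expression $\tfrac{T_1+T_2}{2}$ becomes $\tfrac{T+T^{*}}{2}=\Re T$, while $\tfrac{T_1^{*}-T_2^{*}}{2}$ becomes $\tfrac{T-T^{*}}{2}=\textup{i}\,\Im T$. Thus the off-diagonal block operator appearing in Corollary \ref{cor_blocks} takes the shape
\[
\left[\begin{matrix} O & \Re T\\ \textup{i}\,\Im T & O\end{matrix}\right].
\]
Next I would invoke the phase identity
\[
\omega\!\left(\left[\begin{matrix} O & A\\ e^{\textup{i}\theta}B & O\end{matrix}\right]\right)=\omega\!\left(\left[\begin{matrix} O & A\\ B & O\end{matrix}\right]\right),
\]
which follows at once from Lemma \ref{lem_hirz} by absorbing the parameter $\theta$ into the supremum over angles, with the choice $e^{\textup{i}\theta}=\textup{i}$ to replace $\textup{i}\,\Im T$ by $\Im T$ without changing the numerical radius.

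For the remaining block $\left[\begin{smallmatrix} O & T_1\\ T_2^{*} & O\end{smallmatrix}\right]=\left[\begin{smallmatrix} O & T^{*}\\ T^{*} & O\end{smallmatrix}\right]$, I would observe via Lemma \ref{lem_hirz} that
\[
\omega\!\left(\left[\begin{matrix} O & T^{*}\\ T^{*} & O\end{matrix}\right]\right)=\tfrac{1}{2}\sup_{\theta\in\mathbb{R}}\|T^{*}+e^{\textup{i}\theta}T\|=\omega(T^{*})=\omega(T),
\]
the middle equality being the usual description of $\omega(T)$ as half the supremum of $\|e^{\textup{i}\psi}T+e^{-\textup{i}\psi}T^{*}\|$ after a change of variable $\theta=-2\psi$. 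Substituting these two simplifications into both inequalities of Corollary \ref{cor_blocks} yields precisely the two displayed inequalities of Corollary \ref{cor_blockss}.

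I do not anticipate a genuine obstacle: the argument is entirely a bookkeeping exercise built on top of Corollary \ref{cor_blocks}. The only point that requires a careful sentence is the identification of $\omega\!\left(\left[\begin{smallmatrix} O & T^{*}\\ T^{*} & O\end{smallmatrix}\right]\right)$ with $\omega(T)$ and the phase elimination that turns $\textup{i}\,\Im T$ into $\Im T$ in the lower-left corner; both reduce to applying Lemma \ref{lem_hirz} together with a trivial change of variables in the angular supremum.
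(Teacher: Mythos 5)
Your proposal is correct and is essentially the paper's own argument: the corollary is obtained exactly by taking $T_1=T^{*}$, $T_2=T$ in Corollary \ref{cor_blocks} and using the phase invariance $\omega\!\left(\left[\begin{smallmatrix} O&T_1\\ e^{\textup{i}\theta}T_2&O\end{smallmatrix}\right]\right)=\omega\!\left(\left[\begin{smallmatrix} O&T_1\\ T_2&O\end{smallmatrix}\right]\right)$ coming from Lemma \ref{lem_hirz}. Your extra step identifying $\omega\!\left(\left[\begin{smallmatrix} O&T^{*}\\ T^{*}&O\end{smallmatrix}\right]\right)=\omega(T)$ via the angular supremum is a correct and welcome spelling-out of a detail the paper leaves implicit.
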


Another upper bound for $\|(T_1, T_2)\|_e$ with a useful application can be stated as follows. The significance of this estimate can be seen in the consequent corollary and remark.
\begin{theorem}\label{thm_i}
Let ${{T}_{1}},{{T}_{2}}\in \mathbb B\left( \mathcal H \right)$. Then
\[{{\left\| \left( {{T}_{1}},{{T}_{2}} \right) \right\|}_{e}}\le \sqrt{\omega \left( \left| {{T}_{1}} \right|+{\rm i}\left| {{T}_{2}} \right| \right)\omega \left( \left| T_{1}^{*} \right|+{\rm i}\left| T_{2}^{*} \right| \right)}.\]
\end{theorem}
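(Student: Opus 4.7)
The plan is to combine the mixed Schwarz inequality (Lemma \ref{lemma_mixed}) at the balanced exponent $\alpha=1/2$ with a two-dimensional Cauchy--Schwarz step, and then recognize the resulting sums of squares as numerical radii of the operators $|T_1|+\mathrm{i}|T_2|$ and $|T_1^*|+\mathrm{i}|T_2^*|$.

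First, I would fix unit vectors $x,y\in\mathcal{H}$ and apply Lemma \ref{lemma_mixed} with $\alpha=1/2$ to each $T_i$, which gives
\[
|\langle T_i x,y\rangle|^2 \le \langle |T_i|x,x\rangle\,\langle |T_i^*|y,y\rangle,\qquad i=1,2.
\]
Summing over $i=1,2$ and then applying the Cauchy--Schwarz inequality in $\mathbb{R}^2$ yields
\[
|\langle T_1 x,y\rangle|^2+|\langle T_2 x,y\rangle|^2 \le \sqrt{\langle |T_1|x,x\rangle^2+\langle |T_2|x,x\rangle^2}\,\sqrt{\langle |T_1^*|y,y\rangle^2+\langle |T_2^*|y,y\rangle^2}.
\]

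Next, I would use the algebraic identity valid for any self-adjoint $A,B\in\mathbb{B}(\mathcal{H})$ and any unit vector $z$:
\[
|\langle (A+\mathrm{i}B)z,z\rangle|^2 = \langle Az,z\rangle^2+\langle Bz,z\rangle^2,
\]
since $\langle Az,z\rangle$ and $\langle Bz,z\rangle$ are real. Applied to $A=|T_1|,\,B=|T_2|$ (with $z=x$) and to $A=|T_1^*|,\,B=|T_2^*|$ (with $z=y$), this together with the definition of the numerical radius gives
\[
\sqrt{\langle |T_1|x,x\rangle^2+\langle |T_2|x,x\rangle^2}\le \omega(|T_1|+\mathrm{i}|T_2|),
\]
and similarly for the other factor with $|T_1^*|+\mathrm{i}|T_2^*|$.

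Combining these bounds and then taking the supremum over unit $x,y$ on the left-hand side produces, via \eqref{06},
\[
\|(T_1,T_2)\|_e^2 \le \omega(|T_1|+\mathrm{i}|T_2|)\,\omega(|T_1^*|+\mathrm{i}|T_2^*|),
\]
which is the claimed inequality after extracting the square root. The main conceptual obstacle is the middle step, namely noticing that a two-dimensional Cauchy--Schwarz is the correct splitting tool and that the resulting sums of squares fit exactly into the form $|\langle(A+\mathrm{i}B)z,z\rangle|^2$ for the natural self-adjoint pairs; after that, the computation is routine.
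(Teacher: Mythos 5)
Your proposal is correct and follows essentially the same route as the paper's proof: the mixed Schwarz inequality at $\alpha=\tfrac12$, the Cauchy--Schwarz inequality in $\mathbb{R}^2$, the identification of $\sqrt{\langle |T_1|x,x\rangle^2+\langle |T_2|x,x\rangle^2}$ with $|\langle(|T_1|+\mathrm{i}|T_2|)x,x\rangle|$, and finally the supremum over unit vectors together with \eqref{06}. No gaps to report.
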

\begin{proof}
Let $x,y \in \mathcal H$ with $\left\| x \right\|=\left\| y \right\|=1$. Then
\[\begin{aligned}
   {{\left| \left\langle {{T}_{1}}x,y \right\rangle  \right|}^{2}}+{{\left| \left\langle {{T}_{2}}x,y \right\rangle  \right|}^{2}}&\le \left\langle \left| {{T}_{1}} \right|x,x \right\rangle \left\langle \left| T_{1}^{*} \right|y,y \right\rangle +\left\langle \left| {{T}_{2}} \right|x,x \right\rangle \left\langle \left| T_{2}^{*} \right|y,y \right\rangle  \\ 
   &\quad \text{(by the mixed Schwarz inequality)}\\
 & \le \sqrt{{{\left\langle \left| {{T}_{1}} \right|x,x \right\rangle }^{2}}+{{\left\langle \left| {{T}_{2}} \right|x,x \right\rangle }^{2}}}\sqrt{{{\left\langle \left| T_{1}^{*} \right|y,y \right\rangle }^{2}}+{{\left\langle \left| T_{2}^{*} \right|y,y \right\rangle }^{2}}} \\
 &\quad \text{(by the Cauchy-Schwarz inequality)}\\
 & =\left| \left\langle \left| {{T}_{1}} \right|x,x \right\rangle +{\rm i}\left\langle \left| {{T}_{2}} \right|x,x \right\rangle  \right|\left| \left\langle \left| T_{1}^{*} \right|y,y \right\rangle +{\rm i}\left\langle \left| T_{2}^{*} \right|y,y \right\rangle  \right| \\ 
 & =\left| \left\langle \left( \left| {{T}_{1}} \right|+{\rm i}\left| {{T}_{2}} \right| \right)x,x \right\rangle  \right|\left| \left\langle \left( \left| T_{1}^{*} \right|+{\rm i}\left| T_{2}^{*} \right| \right)y,y \right\rangle  \right| \\ 
 & \le \omega \left( \left| {{T}_{1}} \right|+{\rm i}\left| {{T}_{2}} \right| \right)\omega \left( \left| T_{1}^{*} \right|+{\rm i}\left| T_{2}^{*} \right| \right).  
\end{aligned}\]
That is
\[\sqrt{{{\left| \left\langle {{T}_{1}}x,y \right\rangle  \right|}^{2}}+{{\left| \left\langle {{T}_{2}}x,y \right\rangle  \right|}^{2}}}\le \sqrt{\omega \left( \left| {{T}_{1}} \right|+{\rm i}\left| {{T}_{2}} \right| \right)\omega \left( \left| T_{1}^{*} \right|+{\rm i}\left| T_{2}^{*} \right| \right)}.\]
By taking supremum over $x,y \in \mathcal H$ with $\left\| x \right\|=\left\| y \right\|=1$, we get the desired inequality.
\end{proof}

Combining Theorems \ref{0} and \ref{thm_i} implies the following new upper bound for the numerical radius of the off-diagonal operator matrix.

\begin{corollary}\label{cor_new_block}
Let $T_1,T_2\in\mathbb{B}(\mathcal{H})$. Then
\[\omega \left( \left[ \begin{matrix}
   O & {{T}_{1}}  \\
   T_{2}^{*} & O  \\
\end{matrix} \right] \right)\le \frac{1}{\sqrt{2}}\sqrt{\omega \left( \left| {{T}_{1}} \right|+{\rm i}\left| {{T}_{2}} \right| \right)\omega \left( \left| T_{1}^{*} \right|+{\rm i}\left| T_{2}^{*} \right| \right)}.\]
\end{corollary}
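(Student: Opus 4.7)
The plan is to obtain the stated estimate by simply chaining the two preceding results, with no additional work required. Theorem \ref{0} already supplies the bound
\[\omega \left( \left[ \begin{matrix} O & T_{1} \\ T_{2}^{*} & O \end{matrix} \right] \right)\le \frac{1}{\sqrt{2}}\,\|(T_{1},T_{2})\|_{e},\]
so the task reduces to controlling $\|(T_{1},T_{2})\|_{e}$ by the quantity on the right-hand side of the corollary.

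The second step is immediate from Theorem \ref{thm_i}, which asserts precisely that
\[\|(T_{1},T_{2})\|_{e}\le \sqrt{\omega(|T_{1}|+\mathrm{i}|T_{2}|)\,\omega(|T_{1}^{*}|+\mathrm{i}|T_{2}^{*}|)}.\]
Inserting this estimate into the preceding inequality yields the claim.

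There is essentially no obstacle here; this is a clean two-line deduction. The only points to double-check in writing it up are that the factor $\tfrac{1}{\sqrt{2}}$ transfers correctly through the composition and that the quantities appearing under the square root (numerical radii of the combinations $|T_{1}|+\mathrm{i}|T_{2}|$ and $|T_{1}^{*}|+\mathrm{i}|T_{2}^{*}|$) are written exactly as produced by Theorem \ref{thm_i}, so that the final bound has the stated form.
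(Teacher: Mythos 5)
Your deduction is correct and is exactly the paper's argument: the corollary is stated there as an immediate consequence of combining Theorem \ref{0} with Theorem \ref{thm_i}, precisely the two-step chaining you describe. Nothing further is needed.
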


\begin{remark}
It has been shown in \cite{1} that
\begin{equation*}
\omega \left( \left[ \begin{matrix}
   O & {{T}_{1}}  \\
   T_{2}^{*} & O  \\
\end{matrix} \right] \right)\le \frac{\omega(T_1+T_2^*)+\omega(T_1-T_2^*)}{2},
\end{equation*}
and
\begin{equation}\label{eq_block_normsover2}
\omega \left( \left[ \begin{matrix}
   O & {{T}_{1}}  \\
   T_{2}^{*} & O  \\
\end{matrix} \right] \right)\le \frac{\|T_1\|+\|T_2\|}{2}.
\end{equation}
Here, we give examples to show that the bound we found in Corollary \ref{cor_new_block} can be sharper than these two celebrated bounds.

For example, if we let
\[T_1=\left[
\begin{array}{cc}
 2 & 4 \\
 3 & 4 \\
\end{array}
\right], T_2=\left[
\begin{array}{cc}
 5 & 3 \\
 3 & 4 \\
\end{array}
\right],\] 
then
\[\omega \left( \left[ \begin{matrix}
   O & {{T}_{1}}  \\
   T_{2}^{*} & O  \\
\end{matrix} \right] \right)\approx 7.01793, \frac{\|T_1\|+\|T_2\|}{2} \approx 7.11141, \frac{\omega(T_1+T_2^*)+\omega(T_1-T_2^*)}{2}\approx 8.55017,\]
and 
\[\frac{1}{\sqrt{2}}\sqrt{\omega \left( \left| {{T}_{1}} \right|+{\rm i}\left| {{T}_{2}} \right| \right)\omega \left( \left| T_{1}^{*} \right|+{\rm i}\left| T_{2}^{*} \right| \right)}\approx 7.04011.\]

However, there are other examples where the bound in \eqref{eq_block_normsover2} is better than that in Corollary \ref{cor_new_block}. Indeed, if we let 
\[T_1=\left[
\begin{array}{cc}
 5 & 3 \\
 2 & 2 \\
\end{array}
\right], T_2=\left[
\begin{array}{cc}
 4 & 5 \\
 5 & 0 \\
\end{array}
\right],\]
then numerical calculations show that
\[\omega \left( \left[ \begin{matrix}
   O & {{T}_{1}}  \\
   T_{2}^{*} & O  \\
\end{matrix} \right] \right)\approx 6.89958, \frac{\|T_1\|+\|T_2\|}{2} \approx 6.91809, \frac{\omega(T_1+T_2^*)+\omega(T_1-T_2^*)}{2}\approx 8.91299,\]
and 
\[\frac{1}{\sqrt{2}}\sqrt{\omega \left( \left| {{T}_{1}} \right|+{\rm i}\left| {{T}_{2}} \right| \right)\omega \left( \left| T_{1}^{*} \right|+{\rm i}\left| T_{2}^{*} \right| \right)}\approx 6.92022.\]
\end{remark}

In the above discussion, we provided several results about the possible relations between the numerical radius of the off-diagonal operator matrix and the Euclidean operator norm of its off-diagonal entries. In the next result, we find a relation with the Euclidean operator norm.
\begin{theorem}\label{thm_block_we}
Let $T_1,T_2\in\mathbb{B}(\mathcal{H})$. Then
\[\omega \left( \left[ \begin{matrix}
   O & {{T}_{1}}  \\
   T_{2}^{*} & O  \\
\end{matrix} \right] \right)\le \frac{\sqrt{2}}{2}\sqrt{{{\omega }_{e}}\left( \left| {{T}_{1}} \right|,\left| {{T}_{2}} \right| \right){{\omega }_{e}}\left( \left| T_{1}^{*} \right|,\left| T_{2}^{*} \right| \right)}.\]
\end{theorem}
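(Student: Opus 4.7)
The plan is to combine the two main estimates already established in this section, namely Theorem \ref{0} (bounding the numerical radius of the off-diagonal block by $\|(T_1,T_2)\|_e/\sqrt{2}$) and Theorem \ref{thm_i} (bounding $\|(T_1,T_2)\|_e$ by the geometric mean of two numerical radii of operators of the form $|A|+\textup{i}|B|$). Chaining these two inequalities gives
\[\omega\!\left(\left[\begin{matrix} O & T_1 \\ T_2^* & O\end{matrix}\right]\right)\le \frac{1}{\sqrt{2}}\,\|(T_1,T_2)\|_e \le \frac{1}{\sqrt{2}}\sqrt{\omega(|T_1|+\textup{i}|T_2|)\,\omega(|T_1^*|+\textup{i}|T_2^*|)},\]
which is already close to the stated bound. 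What remains is to pass from the numerical radii $\omega(|T_1|+\textup{i}|T_2|)$ and $\omega(|T_1^*|+\textup{i}|T_2^*|)$ to the Euclidean operator radii $\omega_e(|T_1|,|T_2|)$ and $\omega_e(|T_1^*|,|T_2^*|)$.

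For the second step I would record the following elementary identity: if $A,B\in\mathbb{B}(\mathcal H)$ are self-adjoint, then the scalars $\langle Ax,x\rangle$ and $\langle Bx,x\rangle$ are real, so
\[|\langle(A+\textup{i}B)x,x\rangle|=\sqrt{\langle Ax,x\rangle^{2}+\langle Bx,x\rangle^{2}}=\sqrt{|\langle Ax,x\rangle|^{2}+|\langle Bx,x\rangle|^{2}},\]
and taking the supremum over unit vectors $x$ yields $\omega(A+\textup{i}B)=\omega_e(A,B)$. Applying this identity with $(A,B)=(|T_1|,|T_2|)$ and with $(A,B)=(|T_1^*|,|T_2^*|)$ converts the previous bound into
\[\omega\!\left(\left[\begin{matrix} O & T_1 \\ T_2^* & O\end{matrix}\right]\right)\le \frac{\sqrt{2}}{2}\sqrt{\omega_e(|T_1|,|T_2|)\,\omega_e(|T_1^*|,|T_2^*|)},\]
which is exactly the desired inequality.

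There is no real obstacle here, the argument is essentially a one-line combination of Theorem \ref{0} and Theorem \ref{thm_i} together with the self-adjoint identity above. The only point one has to be slightly careful about is the observation that $|T_1|,|T_2|,|T_1^*|,|T_2^*|$ are all positive (hence self-adjoint), so the identity $\omega(A+\textup{i}B)=\omega_e(A,B)$ applies. If one prefers a self-contained argument, one can alternatively mimic the proof of Theorem \ref{thm_i}: start from the mixed Schwarz inequality (Lemma \ref{lemma_mixed}) with $\alpha=1/2$, then apply the Cauchy--Schwarz inequality in $\mathbb{R}^2$, and take the supremum directly in the form that produces $\omega_e$ rather than $\omega$ of the complex combination; this reproduces Theorem \ref{thm_i} with $\omega_e(|T_1|,|T_2|)$ in place of $\omega(|T_1|+\textup{i}|T_2|)$, and then Theorem \ref{0} finishes the proof.
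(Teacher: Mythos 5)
Your proof is correct, and it takes a more modular route than the paper. The paper proves the theorem from scratch: it returns to the scalar inequality \eqref{1}, applies the mixed Schwarz and Cauchy--Schwarz inequalities at the level of the sesquilinear form $\langle (T_1+T_2)x,y\rangle$, takes the supremum over unit vectors to bound $\|T_1+e^{\mathrm{i}\theta}T_2\|$ (using $|e^{\mathrm{i}\theta}T_2|=|T_2|$ and $|(e^{\mathrm{i}\theta}T_2)^*|=|T_2^*|$), and finishes with Lemma \ref{lem_hirz} --- in effect repeating, inside one proof, the arguments of Theorems \ref{0} and \ref{thm_i}. You instead cite those two theorems as black boxes and add the elementary identity $\omega(A+\mathrm{i}B)=\omega_e(A,B)$ for self-adjoint $A,B$, applied to the positive operators $|T_1|,|T_2|$ and $|T_1^*|,|T_2^*|$; that identity is sound (the quadratic forms are real, so the modulus is exactly the Euclidean combination), and it is the same observation the paper itself records, for Cartesian decompositions, in the remark following the theorem. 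What your route buys is economy and an insight the paper does not make explicit: the bound of Theorem \ref{thm_block_we} is not merely analogous to, but literally equal to, the bound of Corollary \ref{cor_new_block}, since $\omega(|T_1|+\mathrm{i}|T_2|)=\omega_e(|T_1|,|T_2|)$ and $\omega(|T_1^*|+\mathrm{i}|T_2^*|)=\omega_e(|T_1^*|,|T_2^*|)$. What the paper's direct proof buys is self-containedness at the level of inner products, which is why it can be read independently of Theorem \ref{thm_i}. Your fallback sketch (mixed Schwarz with $\alpha=\tfrac12$, Cauchy--Schwarz, supremum over $x,y$ to get $\|(T_1,T_2)\|_e^2\le \omega_e(|T_1|,|T_2|)\,\omega_e(|T_1^*|,|T_2^*|)$, then Theorem \ref{0}) is also valid and is essentially the paper's argument reorganized.
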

\begin{proof}
Let $x,y \in \mathcal H$. Then by \eqref{1}, we have
\[\begin{aligned}
   {{\left| \left\langle \left( {{T}_{1}}+{{T}_{2}} \right)x,y \right\rangle  \right|}^{2}}&\le 2\left( {{\left| \left\langle {{T}_{1}}x,y \right\rangle  \right|}^{2}}+{{\left| \left\langle {{T}_{2}}x,y \right\rangle  \right|}^{2}} \right) \\ 
 & \le 2\left( \left\langle \left| {{T}_{1}} \right|x,x \right\rangle \left\langle \left| T_{1}^{*} \right|y,y \right\rangle +\left\langle \left| {{T}_{2}} \right|x,x \right\rangle \left\langle \left| T_{2}^{*} \right|y,y \right\rangle  \right) \\ 
 &\quad \text{(by the mixed Schwarz inequality)}\\
 & \le 2\sqrt{{{\left\langle \left| {{T}_{1}} \right|x,x \right\rangle }^{2}}+{{\left\langle \left| {{T}_{2}} \right|x,x \right\rangle }^{2}}}\sqrt{{{\left\langle \left| T_{1}^{*} \right|y,y \right\rangle }^{2}}+{{\left\langle \left| T_{2}^{*} \right|y,y \right\rangle }^{2}}} \\ 
 &\quad \text{(by the Cauchy-Schwarz inequality)}\\
 & \le 2\;{{\omega }_{e}}\left( \left| {{T}_{1}} \right|,\left| {{T}_{2}} \right| \right){{\omega }_{e}}\left( \left| T_{1}^{*} \right|,\left| T_{2}^{*} \right| \right).
\end{aligned}\]
That is,
\[\left| \left\langle \left( {{T}_{1}}+{{T}_{2}} \right)x,y \right\rangle  \right|\le \sqrt{2}\sqrt{{{\omega }_{e}}\left( \left| {{T}_{1}} \right|,\left| {{T}_{2}} \right| \right){{\omega }_{e}}\left( \left| T_{1}^{*} \right|,\left| T_{2}^{*} \right| \right)}.\]
By taking supremum over unit vectors $x,y \in \mathcal H$, we get
\[\left\| {{T}_{1}}+{{T}_{2}} \right\|\le \sqrt{2}\sqrt{{{\omega }_{e}}\left( \left| {{T}_{1}} \right|,\left| {{T}_{2}} \right| \right){{\omega }_{e}}\left( \left| T_{1}^{*} \right|,\left| T_{2}^{*} \right| \right)}.\]
Replacing ${{T}_{2}}$ by ${{e}^{{\rm i}\theta }}{{T}_{2}}$ produces
\[\frac{1}{2}\left\| {{T}_{1}}+{{e}^{{\rm i}\theta }}{{T}_{2}} \right\|\le \frac{\sqrt{2}}{2}\sqrt{{{\omega }_{e}}\left( \left| {{T}_{1}} \right|,\left| {{T}_{2}} \right| \right){{\omega }_{e}}\left( \left| T_{1}^{*} \right|,\left| T_{2}^{*} \right| \right)}.\]
Now, the result follows by taking supremum over $\theta \in \mathbb{R}$.
\end{proof}
\begin{remark}
It follows from the definition of $\omega_e$ that if $T=\Re T+{\textup{i}}\Im T$ is the Cartesian decomposition of $T$, then $\omega_e(\Re T,\Im T)=\omega(T).$ So, if $T$ is accretive-dissipative (that is $\Re T,\Im T\geq O$), then Theorem \ref{thm_block_we} implies
\[\omega\left(\left[\begin{matrix}O&\Re T\\ \Im T&O\end{matrix}\right]\right)\leq \frac{1}{\sqrt{2}}\omega(T).\]
Due to Lemma \ref{lem_hirz}, we have, for any two operators $T_1,T_2\in\mathbb{B}(\mathcal{H})$,
\[\frac{1}{2}\|T_1+{\textup{i}}T_2\|\leq \omega\left(\left[\begin{matrix}O&T_1\\T_2^*&O\end{matrix}\right]\right).\]
Combining the above two inequalities, we infer that if $T$ is accretive-dissipative, then
\[\|T\|\leq 2\omega\left(\left[\begin{matrix}O&\Re T\\ \Im T&O\end{matrix}\right]\right)\leq \sqrt{2}\omega(T).\]
Notice that this provides a refinement of the first inequality in \eqref{eq_equiv_intro} when $T$ is accretive-dissipative. 

We point out that the inequality $\|T\|\leq\sqrt{2}\;\omega(T)$ has been known for accretive-dissipative operators $T$, as one can see in \cite{MS}.
\end{remark}

\section{The generalized Euclidean operator radius}
In this section, we explore the $f$-operator radius. In particular, upper bounds for $\omega_f$ will be presented in a way that extends some bounds for $\omega.$

As applications, we will be able to present new refinements and generalizations of \eqref{eq_kitt_intro2} and \eqref{eq_kitt3}.


\begin{theorem}
	\label{thm2.2}	For $j=1,\ldots,n,$ let $T_j\in\mathbb{B}(\mathcal{H})$, and let $f:[0,\infty)\to [0,\infty)$ be an increasing operator convex function such that $f(0)=0$. If $0\leq\alpha,\lambda\leq 1,$ then
	\begin{align}
&\omega _f \left( {T_1 , \ldots ,T_n } \right) 
\nonumber\\ 
&\le  f^{ - 1} \left( {\left\| {\sum\limits_{j = 1}^n {\left[ {\left( {1 - \lambda } \right)f\left( {\frac{{\left( {1 - \lambda } \right)\left| {T_j } \right|^{2\alpha }  + \left( {1 + \lambda } \right)\left| {T_j^* } \right|^{2\left( {1 - \alpha } \right)} }}{2}} \right)} \right.} } \right.} \right. 
\nonumber\\ 
&\qquad\qquad\qquad \left. {\left. {\left. { + \lambda f\left( {\frac{{\left( {2 - \lambda } \right)\left| {T_j } \right|^{2\alpha }  + \lambda \left| {T_j^* } \right|^{2\left( {1 - \alpha } \right)} }}{2}} \right)} \right]} \right\|} \right) 
\nonumber\\ 
&\le f^{ - 1} \left( {\left\| {\int_0^1 {\sum\limits_{j = 1}^n {f\left( {t\left| {T_j } \right|^{2\alpha }  + \left( {1 - t} \right)\left| {T_j^* } \right|^{2\left( {1 - \alpha } \right)} } \right)dt} } } \right\|} \right) 
\nonumber\\ 
&\le f^{ - 1} \left( {\frac{1}{2}\left\| {\sum\limits_{j = 1}^n {\left( {f\left( {\left| {T_j } \right|^{2\alpha } } \right) + f\left( {\left| {T_j^* } \right|^{2\left( {1 - \alpha } \right)} } \right)} \right)} } \right\|} \right). \nonumber
\end{align}
\end{theorem}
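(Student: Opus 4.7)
The plan is to reduce each scalar $|\langle T_j x, x\rangle|$ to an inner product of a single positive operator $M_j$ with $x$, push $f$ inside via Jensen, and then insert the full operator analogue of the Hermite--Hadamard--Farissi chain (the operator version of Lemma \ref{lem_farissi} flagged in the excerpt); the three asserted inequalities will then fall out in one pass.

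Fix a unit vector $x \in \mathcal H$ and an index $j$. Applying Lemma \ref{lemma_mixed} to $T_j$ with $y = x$ (and the parameter there taken to be $1-\alpha$) gives
\[
|\langle T_j x, x\rangle|^2 \le \langle |T_j|^{2\alpha} x, x\rangle \,\langle |T_j^*|^{2(1-\alpha)} x, x\rangle.
\]
The scalar AM--GM inequality then yields $|\langle T_j x, x\rangle| \le \langle M_j x, x\rangle$ with $M_j := \tfrac{1}{2}\bigl(|T_j|^{2\alpha} + |T_j^*|^{2(1-\alpha)}\bigr)$. Because $f$ is increasing and $M_j \ge 0$, the scalar Jensen inequality applied through the spectral measure of $M_j$ at $x$ gives
\[
f\bigl(|\langle T_j x, x\rangle|\bigr) \le f\bigl(\langle M_j x, x\rangle\bigr) \le \langle f(M_j) x, x\rangle.
\]

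Now, writing $A := |T_j|^{2\alpha}$ and $B := |T_j^*|^{2(1-\alpha)}$ so that $M_j = (A+B)/2$, the operator version of Lemma \ref{lem_farissi} furnishes the operator chain
\begin{align*}
f(M_j) &\le (1-\lambda)\,f\!\left(\tfrac{(1-\lambda)A + (1+\lambda)B}{2}\right) + \lambda\, f\!\left(\tfrac{(2-\lambda)A + \lambda B}{2}\right) \\
&\le \int_0^1 f\bigl(tA + (1-t)B\bigr)\,dt \le \tfrac{f(A) + f(B)}{2}.
\end{align*}
Summing these operator inequalities over $j=1,\dots,n$, taking the inner product with $x$, bounding $\langle Sx, x\rangle \le \|S\|$ on each resulting positive sum, applying the increasing $f^{-1}$, and finally taking the supremum over $\|x\|=1$ produces the three claimed bounds in the announced order.

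The principal obstacle is the operator lifting of the Farissi inequality itself: the scalar proof in \cite{Farissi_JMI_2010} invokes convexity at only a few midpoints, but to let the endpoints $a,b$ be replaced by noncommuting positive operators $A,B$ one needs the full strength of operator convexity of $f$, together with operator monotonicity of the slotwise map $A \mapsto \int_0^1 f((1-t)A+tB)\,dt$. Once this operator chain is in place, the remaining ingredients --- mixed Schwarz, AM--GM, a single Jensen, summing, passing to the operator norm, and inverting $f$ --- combine without any further inequality.
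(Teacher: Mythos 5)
Your proposal is correct and takes essentially the same route as the paper: mixed Schwarz with $y=x$, AM--GM, Jensen through the spectral measure, and the operator version of Farissi's refined Hermite--Hadamard chain, which (as in the paper) is obtained simply by applying the scalar Lemma \ref{lem_farissi} to the convex function $t\mapsto\left\langle f\left(tA+(1-t)B\right)x,x\right\rangle$ --- so operator convexity of $f$ alone suffices, and the extra ``operator monotonicity of the slotwise map'' you worry about is not needed. The only cosmetic difference is that you push $f$ inside to the operator level before performing the $\lambda$-splitting, whereas the paper splits at the scalar level first and only then invokes the operator Farissi chain; the resulting bounds and their ordering are identical.
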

\begin{proof}
 Let $x \in \mathscr{H}$ be a unit vector.  Noting Lemma \ref{lemma_mixed}, the fact that $f$ is an increasing convex function, and the simple equation $\frac{(1-\lambda)^2+\lambda(2-\lambda)}{2}a+\frac{(1-\lambda^2)+\lambda^2}{2}b=\frac{a+b}{2},$ we have
\begin{align*}
f\left( {\left| {\left\langle {T_j x,x} \right\rangle } \right|} \right) &\le f\left( {\sqrt {\left\langle {\left| {T_j } \right|^{2\alpha } x,x} \right\rangle \left\langle {\left| {T_j^* } \right|^{2\left( {1 - \alpha } \right)} x,x} \right\rangle } } \right) 
\\ 
&\le \left( {1 - \lambda } \right) {\left\langle {f\left( {\frac{{\left( {1 - \lambda } \right)\left| {T_j } \right|^{2\alpha }  + \left( {1 + \lambda } \right)\left| {T_j^* } \right|^{2\left( {1 - \alpha } \right)} }}{2}} \right)x,x} \right\rangle } 
\\ 
&\qquad+ \lambda  {\left\langle {f\left( {\frac{{\left( {2 - \lambda } \right)\left| {T_j } \right|^{2\alpha }  + \lambda \left| {T_j^* } \right|^{2\left( {1 - \alpha } \right)} }}{2}} \right)x,x} \right\rangle } 
\\ 
&= 	\left\langle {\left[ {\left( {1 - \lambda } \right)f\left( {\frac{{\left( {1 - \lambda } \right)\left| {T_j } \right|^{2\alpha }  + \left( {1 + \lambda } \right)\left| {T_j^* } \right|^{2\left( {1 - \alpha } \right)} }}{2}} \right)} \right.} \right. 
\\ 
&\qquad
\left. {\left. { + \lambda f\left( {\frac{{\left( {2 - \lambda } \right)\left| {T_j } \right|^{2\alpha }  + \lambda \left| {T_j^* } \right|^{2\left( {1 - \alpha } \right)} }}{2}} \right)} \right]x,x} \right\rangle.
\end{align*}
Now, let $g(t)=\left<f\left( {t\left| {T_j } \right|^{2\alpha }  + \left( {1 - t} \right)\left| {T_j^* } \right|^{2\left( {1 - \alpha } \right)} } \right)x,x\right>.$ Operator convexity of $f$ implies convexity of $g$.

Since $f$ is operator convex, the operator version of Lemma \ref{lem_farissi} implies that
\begin{align*}
&\left\langle {\left[ {\left( {1 - \lambda } \right)f\left( {\frac{{\left( {1 - \lambda } \right)\left| {T_j } \right|^{2\alpha }  + \left( {1 + \lambda } \right)\left| {T_j^* } \right|^{2\left( {1 - \alpha } \right)} }}{2}} \right)} \right.} \right. 
\\ 
&\qquad
\left. {\left. { + \lambda f\left( {\frac{{\left( {2 - \lambda } \right)\left| {T_j } \right|^{2\alpha }  + \lambda \left| {T_j^* } \right|^{2\left( {1 - \alpha } \right)} }}{2}} \right)} \right]x,x} \right\rangle\\ 
&\le \left\langle {\left( {\int_0^1 {f\left( {t\left| {T_j } \right|^{2\alpha }  + \left( {1 - t} \right)\left| {T_j^* } \right|^{2\left( {1 - \alpha } \right)} } \right)dt} } \right)x,x} \right\rangle  
\\ 
&\le \left\langle {\left( {\frac{{f\left( {\left| {T_j } \right|^{2\alpha } } \right) + f\left( {\left| {T_j^* } \right|^{2\left( {1 - \alpha } \right)} } \right)}}{2}} \right)x,x} \right\rangle. 
\end{align*}
Thus, we have shown that
\begin{align*}
&\sum\limits_{j = 1}^n {f\left( {\left| {\left\langle {T_j x,x} \right\rangle } \right|} \right)}  
\\ 
&\le \sum\limits_{j = 1}^n {\left\langle {\left[ {\left( {1 - \lambda } \right)f\left( {\frac{{\left( {1 - \lambda } \right)\left| {T_j } \right|^{2\alpha }  + \left( {1 + \lambda } \right)\left| {T_j^* } \right|^{2\left( {1 - \alpha } \right)} }}{2}} \right)} \right.} \right.}  
\\ 
&\qquad\left. {\left. { + \lambda f\left( {\frac{{\left( {2 - \lambda } \right)\left| {T_j } \right|^{2\alpha }  + \lambda \left| {T_j^* } \right|^{2\left( {1 - \alpha } \right)} }}{2}} \right)} \right]x,x} \right\rangle  
\\ 
&\le \sum\limits_{j = 1}^n {\left\langle {\left( {\int_0^1 {f\left( {t\left| {T_j } \right|^{2\alpha }  + \left( {1 - t} \right)\left| {T_j^* } \right|^{2\left( {1 - \alpha } \right)} } \right)dt} } \right)x,x} \right\rangle }  
\\ 
&\le \sum\limits_{j = 1}^n {\left\langle {\left( {\frac{{f\left( {\left| {T_j } \right|^{2\alpha } } \right) + f\left( {\left| {T_j^* } \right|^{2\left( {1 - \alpha } \right)} } \right)}}{2}} \right)x,x} \right\rangle }, 
\end{align*}
and this is equivalent to 
\begin{align*}
&\sum\limits_{j = 1}^n {f\left( {\left| {\left\langle {T_j x,x} \right\rangle } \right|} \right)}  
\\ 
&\le \left\langle {\left( {\sum\limits_{j = 1}^n {\left[ {\left( {1 - \lambda } \right)f\left( {\frac{{\left( {1 - \lambda } \right)\left| {T_j } \right|^{2\alpha }  + \left( {1 + \lambda } \right)\left| {T_j^* } \right|^{2\left( {1 - \alpha } \right)} }}{2}} \right)} \right.} } \right.} \right. 
\\ 
&\qquad\qquad	\left. {\left. {\left. { + \lambda f\left( {\frac{{\left( {2 - \lambda } \right)\left| {T_j } \right|^{2\alpha }  + \lambda \left| {T_j^* } \right|^{2\left( {1 - \alpha } \right)} }}{2}} \right)} \right]} \right)x,x} \right\rangle   
\\ 
&\le \left\langle {\left( {\int_0^1 {\sum\limits_{j = 1}^n {f\left( {t\left| {T_j } \right|^{2\alpha }  + \left( {1 - t} \right)\left| {T_j^* } \right|^{2\left( {1 - \alpha } \right)} } \right)dt} } } \right)x,x} \right\rangle  
\\ 
&\le \frac{1}{2}\left\langle {\sum\limits_{j = 1}^n {\left( {f\left( {\left| {T_j } \right|^{2\alpha } } \right) + f\left( {\left| {T_j^* } \right|^{2\left( {1 - \alpha } \right)} } \right)} \right)x} ,x} \right\rangle.
\end{align*}
Since $f$ is increasing, we have
\begin{align*}
&f^{ - 1} \left( {\sum\limits_{j = 1}^n {f\left( {\left| {\left\langle {T_j x,x} \right\rangle } \right|} \right)} } \right) \\ 
&\le 
  f^{ - 1} \left( {\left\langle {\left( {\sum\limits_{j = 1}^n {\left[ {\left( {1 - \lambda } \right)f\left( {\frac{{\left( {1 - \lambda } \right)\left| {T_j } \right|^{2\alpha }  + \left( {1 + \lambda } \right)\left| {T_j^* } \right|^{2\left( {1 - \alpha } \right)} }}{2}} \right)} \right.} } \right.} \right.} \right. \\ 
	&\qquad\qquad\qquad 
	\left. {\left. {\left. {\left. { + \lambda f\left( {\frac{{\left( {2 - \lambda } \right)\left| {T_j } \right|^{2\alpha }  + \lambda \left| {T_j^* } \right|^{2\left( {1 - \alpha } \right)} }}{2}} \right)} \right]} \right)x,x} \right\rangle } \right)  
\\ 
&\le f^{ - 1} \left( {\left\langle {\left( {\int_0^1 {\sum\limits_{j = 1}^n {f\left( {t\left| {T_j } \right|^{2\alpha }  + \left( {1 - t} \right)\left| {T_j^* } \right|^{2\left( {1 - \alpha } \right)} } \right)dt} } } \right)x,x} \right\rangle } \right) 
\\ 
&\le f^{ - 1} \left( {\frac{1}{2}\left\langle {\sum\limits_{j = 1}^n {\left( {f\left( {\left| {T_j } \right|^{2\alpha } } \right) + f\left( {\left| {T_j^* } \right|^{2\left( {1 - \alpha } \right)} } \right)} \right)x} ,x} \right\rangle } \right). 
\end{align*}
Again, since $f^{-1}$ is decreasing, we deduce the required result by taking the supremum over all unit vector $x\in \mathscr{H}$. 
\end{proof}


\begin{corollary}
	\label{cor2.2}	Under the assumptions of Theorem \ref{thm2.2}, we have
	\begin{align}
		&\omega _f \left( {T_1 , \ldots ,T_n } \right) 
		\nonumber\\ 
		&\le  f^{ - 1} \left( {\left\| {\sum\limits_{j = 1}^n {\left[ {\left( {1 - \lambda } \right)f\left( {\frac{{\left( {1 - \lambda } \right)\left| {T_j } \right|  + \left( {1 + \lambda } \right)\left| {T_j^* } \right|  }}{2}} \right)} \right.} } \right.} \right. 
		\nonumber\\ 
		&\qquad\qquad\qquad \left. {\left. {\left. { + \lambda f\left( {\frac{{\left( {2 - \lambda } \right)\left| {T_j } \right|   + \lambda \left| {T_j^* } \right|  }}{2}} \right)} \right]} \right\|} \right) 
		\nonumber\\ 
		&\le f^{ - 1} \left( {\left\| {\int_0^1 {\sum\limits_{j = 1}^n {f\left( {t\left| {T_j } \right|  + \left( {1 - t} \right)\left| {T_j^* } \right|  } \right)dt} } } \right\|} \right) 
		\nonumber\\ 
		&\le f^{ - 1} \left( {\frac{1}{2}\left\| {\sum\limits_{j = 1}^n {\left( {f\left( {\left| {T_j } \right|  } \right) + f\left( {\left| {T_j^* } \right|  } \right)} \right)} } \right\|} \right). \nonumber
	\end{align}
In particular,
	\begin{align}
	&\omega _f \left( {T_1 , \ldots ,T_n } \right) 
	\nonumber\\ 
	&\le  	f^{ - 1} \left( {\frac{1}{2}\left\| {\sum\limits_{j = 1}^n {\left[ {f\left( {\frac{{\left| {T_j } \right| + 3\left| {T_j^* } \right|}}{4}} \right) + f\left( {\frac{{3\left| {T_j } \right| + \left| {T_j^* } \right|}}{4}} \right)} \right]} } \right\|} \right)
	\label{eq2.6}\\ 
	&\le f^{ - 1} \left( {\left\| {\int_0^1 {\sum\limits_{j = 1}^n {f\left( {t\left| {T_j } \right|  + \left( {1 - t} \right)\left| {T_j^* } \right|  } \right)dt} } } \right\|} \right) 
	\nonumber\\ 
	&\le f^{ - 1} \left( {\frac{1}{2}\left\| {\sum\limits_{j = 1}^n {\left( {f\left( {\left| {T_j } \right|  } \right) + f\left( {\left| {T_j^* } \right|  } \right)} \right)} } \right\|} \right). \nonumber
\end{align}
\end{corollary}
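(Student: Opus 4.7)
The plan is to obtain Corollary \ref{cor2.2} as a direct specialization of Theorem \ref{thm2.2}, with no new analytic content required. Indeed, the full four-term chain in Corollary \ref{cor2.2} is precisely what Theorem \ref{thm2.2} yields when we substitute the particular value $\alpha = 1/2$, so that $|T_j|^{2\alpha} = |T_j|$ and $|T_j^*|^{2(1-\alpha)} = |T_j^*|$ throughout. Since the hypotheses on $f$ are identical (strictly increasing, operator convex, $f(0)=0$) and the parameter $\lambda\in[0,1]$ is left free, there is nothing to verify beyond writing the specialization down; every inequality in the chain inherits its justification from the corresponding inequality in Theorem \ref{thm2.2}.

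For the \emph{In particular} statement, the plan is to additionally fix $\lambda = 1/2$ in the first term of the just-obtained chain. A short direct computation gives
\[
\frac{(1-\lambda)|T_j|+(1+\lambda)|T_j^*|}{2}\Big|_{\lambda=1/2}=\frac{|T_j|+3|T_j^*|}{4},\qquad
\frac{(2-\lambda)|T_j|+\lambda|T_j^*|}{2}\Big|_{\lambda=1/2}=\frac{3|T_j|+|T_j^*|}{4},
\]
and the coefficients $(1-\lambda)$ and $\lambda$ both collapse to $1/2$, which can be pulled outside the norm. This produces exactly \eqref{eq2.6}. The remaining two inequalities in the \emph{In particular} chain are unchanged from the general case, as they do not involve $\lambda$.

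Because this proof is purely a substitution of parameters into a previously established chain, there is essentially no technical obstacle; the only minor bookkeeping point is confirming that when $\lambda = 1/2$ the two weights become equal, which justifies the factor of $1/2$ pulled outside the operator norm in \eqref{eq2.6}. The proof can therefore be written in a single short paragraph invoking Theorem \ref{thm2.2} with $\alpha = 1/2$ and then again with $\lambda = 1/2$.
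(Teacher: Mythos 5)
Your proposal is correct and matches the paper's intent exactly: the first chain is Theorem \ref{thm2.2} with $\alpha=\tfrac12$ (so $|T_j|^{2\alpha}=|T_j|$ and $|T_j^*|^{2(1-\alpha)}=|T_j^*|$), and \eqref{eq2.6} then follows by taking $\lambda=\tfrac12$, with the two equal weights $\tfrac12$ factored out of the norm as you compute. Nothing further is needed.
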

Since the function $f \left(t\right) = t^p$, $1 \le p \le 2$ is an increasing operator convex function, Theorem \ref{thm2.2} implies the following.


\begin{corollary}
	\label{cor2.3}	Under the assumptions of Theorem \ref{thm2.2}, we have
	\begin{align*}
		&\omega _f \left( {T_1 , \ldots ,T_n } \right) 
		\nonumber\\ 
		&\le   \left( {\left\| {\sum\limits_{j = 1}^n {\left[ {\left( {1 - \lambda } \right)\left( {\frac{{\left( {1 - \lambda } \right)\left| {T_j } \right|  + \left( {1 + \lambda } \right)\left| {T_j^* } \right|  }}{2}} \right)^p} \right.} } \right.} \right. 
		 \\ 
		&\qquad\qquad\qquad \left. {\left. {\left. { + \lambda \left( {\frac{{\left( {2 - \lambda } \right)\left| {T_j } \right|   + \lambda \left| {T_j^* } \right|  }}{2}} \right)^p} \right]} \right\|} \right)^{\frac{1}{p}} 
		\nonumber\\ 
		&\le   \left( {\left\| {\int_0^1 {\sum\limits_{j = 1}^n { \left( {t\left| {T_j } \right|  + \left( {1 - t} \right)\left| {T_j^* } \right|  } \right)^p dt} } } \right\|} \right)^{\frac{1}{p}} 
		\nonumber\\ 
		&\le  \left( {\frac{1}{2}\left\| {\sum\limits_{j = 1}^n {\left( { \left| {T_j } \right|   ^p +   \left| {T_j^* } \right|  ^p} \right)} } \right\|} \right)^{\frac{1}{p}}. \nonumber
	\end{align*}
	\end{corollary}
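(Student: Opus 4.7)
The plan is to specialize Corollary \ref{cor2.2} to the power function $f(t)=t^p$ with $1\le p\le 2$ and then invert. First I would verify that $f$ fulfills the hypotheses required by Theorem \ref{thm2.2} (and hence Corollary \ref{cor2.2}): clearly $f(0)=0$, and $f$ is strictly increasing on $[0,\infty)$ since $p\ge 1>0$. The essential point is operator convexity; by the Löwner--Heinz theorem, $t\mapsto t^p$ is operator convex on $[0,\infty)$ precisely when $p\in[1,2]$, so this range is exactly what is needed. With the hypotheses met, Corollary \ref{cor2.2} is available and $f^{-1}(s)=s^{1/p}$.

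Next I would substitute $f(u)=u^p$ into each of the three terms of Corollary \ref{cor2.2}. Each instance of $f(\cdot)$ turns into the $p$-th power of its argument (so, for example, $f\!\left(\tfrac{(1-\lambda)|T_j|+(1+\lambda)|T_j^*|}{2}\right)$ becomes $\left(\tfrac{(1-\lambda)|T_j|+(1+\lambda)|T_j^*|}{2}\right)^{p}$, and similarly for the integrand and the endpoint terms), while each outer $f^{-1}$ becomes a $(1/p)$-th power. Performing these substitutions in the three-term chain of Corollary \ref{cor2.2} reproduces verbatim the chain of inequalities stated in Corollary \ref{cor2.3}.

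I do not anticipate any genuine obstacle: the only nontrivial ingredient is the operator convexity of $t^p$ on $[0,2]$, but that is standard and is precisely why the hypothesis $1\le p\le 2$ appears. Once that is in hand, the proof is a direct substitution into the already-proved Corollary \ref{cor2.2}.
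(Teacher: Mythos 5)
Your proposal is correct and matches the paper's own argument, which likewise just observes that $f(t)=t^{p}$, $1\le p\le 2$, is an increasing operator convex function with $f(0)=0$ and substitutes it (with $f^{-1}(s)=s^{1/p}$) into the already-established chain; the paper cites Theorem \ref{thm2.2}, but since the stated bounds involve $\left| T_j \right|$ and $\left| T_j^* \right|$ this amounts to the case $\alpha=\tfrac12$, i.e.\ exactly the specialization of Corollary \ref{cor2.2} you perform. The only cosmetic remark is that the operator convexity of $t^{p}$ on $[0,\infty)$ for $p\in[1,2]$ is usually quoted as a standard companion of the L\"{o}wner--Heinz theorem rather than the theorem itself, but this does not affect the argument.
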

\begin{remark}
Setting $n=1$ and $T_1=T$ in \eqref{eq2.6}, and using the function $f(t)=t^2$ yield the following refinement of \cite[Corollary 2.1]{SM2021}:
	\begin{align*}
\omega^2 \left( {T} \right) 
	&\le \frac{1}{2}\left\| {  { \left( {\frac{{\left| {T  } \right| + 3\left| {T ^* } \right|}}{4}} \right)^2 +  \left( {\frac{{3\left| {T  } \right| + \left| {T ^* } \right|}}{4}} \right)^2 } } \right\| 
	\\ 
	&\le    \left\| {\int_0^1 {  {\left( {t\left| {T  } \right|  + \left( {1 - t} \right)\left| {T ^* } \right|  } \right)^2dt} } } \right\| 
	\nonumber\\ 
	&\le   \frac{1}{2}\left\| {     \left| {T  } \right|  ^2 +  \left| {T ^* } \right|  ^2   } \right\|.  
\end{align*}
Of course, this provides a two-term refinement of the second inequality in \eqref{eq_kitt_intro2}.
Notice that the function $f(t)=t$ implies \eqref{eq_kitt_intro1}.
\end{remark}

When treating numerical radius bounds, it is interesting to investigate the numerical radius of the product of two operators. The following result presents such a treatment for the $f$-operator radius.
\begin{theorem}\label{thm_squared}
	\label{thm}	For $j=1,\ldots,n,$ let $S_j,T_j\in\mathbb{B}(\mathcal{H})$, and let $f:[0,\infty)\to [0,\infty)$ be an increasing operator convex function such that $f(0)=0$. Then
	\begin{align}
		\omega _f \left( {T^*_1S_1 , \ldots ,T^*_nS_n } \right) &\le f^{ - 1} \left( {\left\| {\int_0^1 {\left( {\sum\limits_{j = 1}^n {f\left( {t\left| {S_j } \right|^2   + \left( {1 - t} \right)\left| {T_j  } \right|^2 } \right)} } \right)dt} } \right\|} \right) 
		\nonumber\\
		&\le f^{ - 1} \left( {\frac{1}{2}\left\| {\sum\limits_{j = 1}^n {f\left( {\left| {S_j } \right|^2  } \right) + f\left( {\left| {T_j  } \right|^2  } \right)} } \right\|} \right)\nonumber.
	\end{align}
\end{theorem}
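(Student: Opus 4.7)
The plan is to follow the template of Theorem \ref{thm2.2}, but replace the mixed Schwarz inequality with the ordinary Cauchy--Schwarz inequality applied to the factorization $\langle T_j^{*}S_j x, x\rangle = \langle S_j x, T_j x\rangle$. For a unit vector $x\in\mathcal{H}$, this yields
\[
|\langle T_j^{*}S_j x, x\rangle|\le \|S_j x\|\,\|T_j x\| = \sqrt{\langle |S_j|^{2}x,x\rangle\,\langle |T_j|^{2}x,x\rangle}.
\]
Applying the increasing function $f$ together with the scalar AM--GM inequality $\sqrt{ab}\le (a+b)/2$ gives
\[
f(|\langle T_j^{*}S_j x, x\rangle|)\le f\!\left(\frac{\langle |S_j|^{2}x,x\rangle+\langle |T_j|^{2}x,x\rangle}{2}\right).
\]

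Next I use the scalar Hermite--Hadamard inequality $f\!\left(\tfrac{a+b}{2}\right)\le \int_0^1 f(ta+(1-t)b)\,dt$ (valid for convex $f$) with $a=\langle |S_j|^{2}x,x\rangle$ and $b=\langle |T_j|^{2}x,x\rangle$ to pass to an integral form. Inside the integral, Jensen's operator inequality (available because $f$ is operator convex with $f(0)=0$ and $\|x\|=1$) lifts $f$ of the scalar $\langle (t|S_j|^{2}+(1-t)|T_j|^{2})x,x\rangle$ to $\langle f(t|S_j|^{2}+(1-t)|T_j|^{2})x,x\rangle$. Summing over $j$, interchanging sum and integral with the inner product, applying the increasing function $f^{-1}$, and finally taking the supremum over unit vectors together with $\langle Ax,x\rangle\le \|A\|$ for positive $A$ delivers the first claimed inequality.

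For the second inequality I use operator convexity at the operator level: for self-adjoint $A,B\ge 0$,
\[
f(tA+(1-t)B)\le t\,f(A)+(1-t)\,f(B)
\]
in the operator order, so integrating over $t\in[0,1]$ gives $\int_0^1 f(tA+(1-t)B)\,dt\le \tfrac{1}{2}\bigl(f(A)+f(B)\bigr)$. Applying this with $A=|S_j|^{2}$ and $B=|T_j|^{2}$, summing, using monotonicity of the norm on positive operators, and then applying $f^{-1}$ yields the stated upper bound.

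The main subtlety is keeping the scalar-level Hermite--Hadamard step and the operator-level estimates consistent: Jensen's operator inequality for operator convex $f$ is essential for lifting the scalar argument to an operator argument inside the integral, and operator convexity (not merely ordinary convexity) is needed for the second inequality as well. No commutativity between $S_j$ and $T_j$ is required anywhere; the two inequalities reduce, via these tools, to the scalar Hermite--Hadamard chain applied in the operator-quadratic-form sense.
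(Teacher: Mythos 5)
Your proposal is correct and follows essentially the same route as the paper's proof: Cauchy--Schwarz on $\langle S_jx,T_jx\rangle$, AM--GM, the scalar Hermite--Hadamard step, Jensen's inequality to lift to the quadratic form $\left\langle f\left(t|S_j|^2+(1-t)|T_j|^2\right)x,x\right\rangle$, and operator convexity plus integration over $t$ for the final bound. No substantive differences to report.
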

\begin{proof}
Let $x \in \mathscr{H}$ be a unit vector, then  by the Cauchy-Schwarz inequality, we have 	
	\begin{align}
		f\left( {\left| {\left\langle {T^*_jS_j x,x} \right\rangle } \right|} \right) &=f\left( {\left| {\left\langle {S_j x,T_jx} \right\rangle } \right|} \right) 
		\nonumber\\
		&\le f\left( {\left\| {S_j x} \right\|\left\| {T_j x} \right\|} \right)
		\nonumber\\
		&\le f\left( {\sqrt {\left\langle {\left| {S_j } \right|^2 x,x} \right\rangle \left\langle {\left| {T_j  } \right|^2  x,x} \right\rangle } } \right) 
		\nonumber\\ 
		&\le f\left( {\frac{{\left\langle {\left| {S_j } \right|^2 x,x} \right\rangle  + \left\langle {\left| {T_j  } \right|^2  x,x} \right\rangle }}{2}} \right) 
		\nonumber\\ 
		&\le \int_0^1 {f\left( {t\left\langle {\left| {S_j } \right|^2 x,x} \right\rangle  + \left( {1 - t} \right)\left\langle {\left| {T_j  } \right|^2 x,x} \right\rangle } \right)dt}.  \label{eq2.8}
	\end{align}	
	On the other hand, convexity of $f$ implies
	\begin{align*}
		&f\left( {t\left\langle {\left| {S_j } \right|^2  x,x} \right\rangle  + \left( {1 - t} \right)\left\langle {\left| {T_j } \right|^2  x,x} \right\rangle } \right) 
		\\
		&= f\left( {\left\langle {\left[ {t\left| {S_j } \right|^2  + \left( {1 - t} \right)\left| {T_j  } \right|^2  } \right]x,x} \right\rangle } \right) 
		\\ 
		&\le \left\langle {f\left( {t\left| {S_j } \right|^2 + \left( {1 - t} \right)\left| {T_j  } \right|^2  } \right)x,x} \right\rangle  
		\\ 
		&\le t\left\langle {f\left( {\left| {S_j } \right|^2 } \right)x,x} \right\rangle  + \left( {1 - t} \right)\left\langle {f\left( {\left| {T_j  } \right|^2  } \right)x,x} \right\rangle . 
	\end{align*}
	Integrating the above inequality with respect to $t$ over $\left[0,1\right]$, we obtain
	\begin{align}
		&\int_0^1 {f\left( {t\left\langle {\left| {S_j } \right|^2 x,x} \right\rangle  + \left( {1 - t} \right)\left\langle {\left| {T_j  } \right|^2  x,x} \right\rangle } \right)dt}  
		\nonumber\\ 
		&\le \left\langle {\int_0^1 {\left\{ {f\left( {t\left| {S_j } \right|^2 + \left( {1 - t} \right)\left| {T_j  } \right|^2 } \right)dt} \right\}x} ,x} \right\rangle  
		\nonumber\\
		&\le \left\langle {\left( {\frac{{f\left( {\left| {S_j } \right|^2  } \right) + f\left( {\left| {T_j  } \right|^2 } \right)}}{2}} \right)x,x} \right\rangle.  \label{eq2.9}
	\end{align}
	Combining \eqref{eq2.8} and \eqref{eq2.9}, we get
	\begin{align*}
		f\left( {\left| {\left\langle {T_j^*S_j x,x} \right\rangle } \right|} \right) &\le \left\langle {\int_0^1 {\left\{ {f\left( {t\left| {S_j } \right|^2 + \left( {1 - t} \right)\left| {T_j  } \right|^2 } \right)dt} \right\}x} ,x} \right\rangle  
		\\
		&\le \left\langle {\left( {\frac{{f\left( {\left| {S_j } \right|^2  } \right) + f\left( {\left| {T_j  } \right|^2  } \right)}}{2}} \right)x,x} \right\rangle 
	\end{align*}
	and this implies that
	\begin{align*}
		\sum\limits_{j = 1}^n {f\left( {\left| {\left\langle {T^*_jS_j x,x} \right\rangle } \right|} \right)}  
		&\le \sum\limits_{j = 1}^n {\left\langle {\int_0^1 {\left\{ {f\left( {t\left| {S_j } \right|^2   + \left( {1 - t} \right)\left| {T_j  } \right|^2  } \right)dt} \right\}x} ,x} \right\rangle }  
		\\ 
		&\le \sum\limits_{j = 1}^n {\left\langle {\left( {\frac{{f\left( {\left| {S_j } \right|^2 } \right) + f\left( {\left| {T_j  } \right|^2  } \right)}}{2}} \right)x,x} \right\rangle. }   
	\end{align*}
	Since $f^{-1}$ is increasing, we have
	\begin{align*}
		f^{ - 1} \left( {\sum\limits_{j = 1}^n {f\left( {\left| {\left\langle {T^*_jS_j x,x} \right\rangle } \right|} \right)} } \right) 
		&\le f^{ - 1} \left( {\sum\limits_{j = 1}^n {\left\langle {\int_0^1 {\left\{ {f\left( {t\left| {S_j } \right|^2 + \left( {1 - t} \right)\left| {T_j  } \right|^2  } \right)dt} \right\}} x,x} \right\rangle } } \right) 
		\\ 
		&\le f^{ - 1} \left( {\frac{1}{2}\sum\limits_{j = 1}^n {\left\langle {\left[ {f\left( {\left| {S_j } \right|^2  } \right) + f\left( {\left| {T_j  } \right|^2  } \right)} \right]x,x} \right\rangle } } \right)  
	\end{align*}
which is analogous to write
	\begin{align*}
		f^{ - 1} \left( {\sum\limits_{j = 1}^n {f\left( {\left| {\left\langle {S_j x,x} \right\rangle } \right|} \right)} } \right) &\le f^{ - 1} \left( {\left\langle {\left[ {\int_0^1 {\left( {\sum\limits_{j = 1}^n {f\left( {t\left| {S_j } \right|^2   + \left( {1 - t} \right)\left| {T_j  } \right|^2  } \right)} } \right)dt} } \right]x,x} \right\rangle } \right) 
		\\ 
		&\le f^{ - 1} \left( {\left\langle {\left( {\sum\limits_{j = 1}^n {\frac{{f\left( {\left| {S_j } \right|^2  } \right) + f\left( {\left| {T_j  } \right|^2 } \right)}}{2}} } \right)x,x} \right\rangle } \right). 
	\end{align*}
	The desired result follows by taking the supremum over all unit vector $x\in \mathscr{H}$. 
\end{proof}	
\begin{remark}
When $n=1,$ Theorem \ref{thm_squared} gives a refinement and a generalization of \eqref{eq_kitt3}.
\end{remark}



\subsection*{Declarations}
\begin{itemize}
\item {\bf{Availability of data and materials}}: Not applicable.
\item {\bf{Competing interests}}: The authors declare that they have no competing interests.
\item {\bf{Funding}}: Not applicable.
\item {\bf{Authors' contributions}}: Authors declare that they have contributed equally to this paper. All authors have read and approved this version.
\end{itemize}

\vskip 0.5 true cm

\noindent{\tiny (M. Sababheh) Department of Basic Sciences, Princess Sumaya University for Technology, Amman, Jordan}
	
\noindent	{\tiny\textit{E-mail address:} sababheh@yahoo.com; sababheh@psut.edu.jo}

\vskip 0.3 true cm

\noindent{\tiny (H. R. Moradi) Department of Mathematics, Mashhad Branch, Islamic Azad University, Mashhad, Iran
	
\noindent	\textit{E-mail address:} hrmoradi@mshdiau.ac.ir}

\vskip 0.3 true cm

\noindent{\tiny (M. W. Alomari) Department of Mathematics, Faculty of Science and Information Technology, Irbid National University,
Irbid 21110, Jordan}
	
\noindent	{\tiny\textit{E-mail address:} mwomath@gmail.com}

\end{document}